\documentclass[12pt]{article}
\usepackage{amsmath,amsfonts,amsthm,mathrsfs,indentfirst}
\usepackage[left=3cm,right=3cm,top=2.5cm,bottom=2.5cm]{geometry}
\usepackage{graphics}
\usepackage{authblk}
\usepackage[numbers,sort&compress]{natbib}
\usepackage[hypertex]{hyperref}
\usepackage{xcolor}
\usepackage{colortbl}

\newtheorem{thm}{Theorem}

\newtheorem{lemma}{Lemma}

\newtheorem{defn}{Definition}
\newtheorem{rem}{Remark}
\newtheorem{corollary}{Corollary}

\def\p{\mathcal{P}}

\def\ba{\boldsymbol\alpha}

\def\R{\mathbb{R}}
\def\N{\mathbb{N}}
\def\bp{\mathbf{p}}
\def\X{\mathscr{X}}
\def\Y{\mathscr{Y}}

\def\s{\mathcal{S}}

\def\span{\mathrm{Span}}

\allowdisplaybreaks
\begin{document}
\title{On the Singularity of Multivariate Hermite Interpolation  \thanks{This project is supported by
NNSFC (Nos. 11301053,61033012,11171052,11271060,61272371) and ``the
Fundamental Research Funds for the Central Universities''.}}
\author[a]{Zhaoliang Meng\thanks{Corresponding author: mzhl@
dlut.edu.cn}}
\author[a,b]{Zhongxuan Luo}
\affil[a]{\it \small School of Mathematical Sciences,Dalian
University of Technology, Dalian, 116024, China}  \affil[b]{\it School of Software,Dalian
University of Technology, Dalian, 116620, China}

\maketitle
\begin{abstract}
In this paper we study the singularity of multivariate Hermite
interpolation of type total degree. We present two methods to judge the
singularity of the interpolation schemes considered and by methods
to be developed, we show that all Hermite interpolation of type
total degree  on $m=d+k$ points in $\R^d$ is singular if $d\geq 2k$.
And then we solve the Hermite interpolation problem on $m\leq d+3$
nodes completely. Precisely, all Hermite interpolations of type
total degree on $m\leq d+1$ points with $d\geq 2$ are singular;  only three cases
for
$m=d+2$ and one case for $m=d+3$ can produce regular
Hermite interpolation schemes, respectively. Besides, we also
present a method to compute the interpolation space for Hermite
interpolation of type total degree.
    \\[6pt]
    \textbf{Keywords:} Hermite interpolation; Singularity;
    Interpolation space; Polynomial ideal
\end{abstract}

\section{Introduction}

Let $\Pi^d$ be the space of all polynomials in $d$ variables, and let
$\Pi_n^d$ be the subspace of polynomials of total degree at most $n$.
 Let $\X=\{X_1,X_2,\ldots,X_m\}$ be a set of pairwise distinct
points in $\R^d$ and $\bp=\{p_1,p_2,\ldots,p_m\}$ be a set of $m$ nonnegative integers.
The Hermite interpolation problem to be considered in this paper is described as follows:
Find a (unique) polynomial $f\in \Pi_n^d$ satisfying
\begin{equation}
    \frac{\partial^{\alpha_1+\alpha_2+\ldots+\alpha_d}}{\partial
    x_1^{\alpha_1}\ldots \partial x_d^{\alpha_d}} f(X_q)=c_{q,\ba},\quad
    1\leq q\leq m,\quad 0\leq |\ba|\leq p_q,
\end{equation}
for given values $c_{q,\ba}$, where the numbers $p_q$ and $n$ are assumed to satisfy
\begin{eqnarray}\label{number_condition}
    \binom{n+d}{d}=\sum_{q=1}^m\binom{p_q+d}{d}.
\end{eqnarray}

Following \cite{1992-Lorentz-p-,2000-Lorentz-p167-201}, such kind of
problem is called Hermite interpolation of type total degree. The
interpolation problem $(\bp,\X)$ is called regular if the above
equation has a unique solution for each choice of values
$\{c_{q,\ba},1\leq q\leq m,0\leq |\ba|\leq p_q\}$. Otherwise, the
interpolation problem is singular. As shown in \cite{1995-Gevorgian-p23-35}, the regularity of Hermite
interpolation problem $(\bp,\X)$ implies that it is regular for
almost all $\X\subset\R^d$ with $|\X|=m$.
\begin{defn}[\cite{1995-Gevorgian-p23-35}]
    We say that the interpolation scheme $\bp$ is:
    \begin{itemize}
    \item Regular if the problem $(\bp,\X)$ is regular for all $\X$.
    \item Almost regular if the problem $(\bp,\X)$ is regular for almost all $\X$.
    \item Singular if $(\bp,\X)$ is singular for all $\X$.
    \end{itemize}
\end{defn}

The special case in which the $p_q$ are all the same is called uniform Hermite
interpolation of type total degree. In the case of uniform Hermite
interpolation of type total degree, Eq. \eqref{number_condition} should be
changed to
\begin{eqnarray}\label{number_condition_uniform}
    \binom{n+d}{d}=m\binom{p+d}{d}.
\end{eqnarray}

The research of regularity of multivariate Hermite interpolation is
more difficult than Lagrange case, although the latter is also
difficult. One of the main reasons is that Eq.
\eqref{number_condition} or \eqref{number_condition_uniform} does not
hold in some cases. Up to now, we have known that all the Hermite
interpolation on $m\leq d+1$ points are singular except for Lagrange
interpolation, see
\cite{1992-Lorentz-p-,2000-Lorentz-p167-201,1984-LeM'ehaut'e-p-}.
Besides, no any other results appeared for $m\geq d+2$.
Actually, Hermite interpolation of type total degree on $d+2$ nodes
in $\R^d$ are not necessary singular. For more research of this
area, we can refer to
\cite{sauer1995multivariate,2000-Lorentz-p167-201,1992-Lorentz-p-,lorentz1990bivariate,1984-LeM'ehaut'e-p-,habib1996recursive,2000-Gasca-p23-35,2000-Gasca-p377-410,gasca2000bivariate,gasca1982lagrange,chai2011proper}
and the reference therein.

The main purpose of this paper is to investigate the singularity of
Hermite interpolation for $m=d+k$ with $k=1,2,3$. This paper will propose two
ways to prove singularity for Hermite interpolation of type total degree.
The first one consists of constructing the interpolation space from the view
of polynomial ideal. We state this method in a general way which is also useful for other types of
interpolations. The second one depends on an algorithm (see Theorem \ref{thm:main})
from which we can get a polynomial being a solution of the homogenous
interpolation problem. This method leads to the most general singularity
theorems. To get complete results for  $m=d+k$ with $k=1,2,3$, we employ the
second method to get the results for general cases and employ the first one
for other special cases.

By the presented methods, we show that all
Hermite interpolation of type total degree on $m\leq d+1$ nodes in
$\R^d$ are singular except for Lagrange interpolation; on $m=d+2$
nodes in $\R^d$ are singular except for three cases; on $m=d+3$ nodes are
singular except for one case.
Moreover, we also show all the hermite
interpolation problem of type total degree with $m=d+k$ nodes are
singular for $d\geq 2k$.
The result of $m\leq d+1$ is well known,
but our method is different.
To the best of our knowledge, all the results except
for the case of $m\leq d+1$ seem to new. 

This paper is organized as follows. In section 2, we will consider
the interpolation space satisfying the Hermite interpolation requirement
from the view of polynomial ideal. In this section, Eq.
\eqref{number_condition} is not required and the polynomial space is
not necessary $\Pi_n^d$. In section 3, we consider the singularity
of the Hermite interpolation of type total degree and present the
main results. Finally, in section 4, we conclude our results.

\section{Interpolation Space}
It is well known that polynomial interpolation is closely related to polynomial
ideal. This relation is implied in many early papers and widely employed, for
example
\cite{2001-Sauer-p2293-2309,2000-Moeller-p335-362,1990-Boor-p287-302}. In \cite{2000-Xu-p363-376},
Xu presented a solution to the
Lagrange interpolation problem from the view of polynomial ideal. This section will generalize Xu's results to
Hermite case. That is, we will consider the construction of the interpolation space with respect
to Hermite interpolation problem. This also proposes an approach to judge
the singularity of Hermite interpolation problem of total degree for given
$\X$ and $\bp$. 

Precisely, in this section, we consider the following interpolation problem:

Let $\X=\{X_1,X_2,\ldots,X_m\}$ be a set of pairwise distinct points in $\R^d$ and
$\bp=\{p_1,p_2,\ldots,p_m\}$ be a set of $m$ nonnegative integers. Find a subspace
$\p\subset \Pi^d$ such that for any given real numbers $c_{q,\ba},1\leq q\leq m,1\leq
|\ba|\leq p_q$ there exists a unique polynomial $f\in\p$ satisfying the interpolation
conditions
\begin{equation}
    \frac{\partial^{\alpha_1+\alpha_2+\ldots+\alpha_d}}{\partial
    x_1^{\alpha_1}\ldots \partial x_d^{\alpha_d}} f(X_q)=c_{q,\ba},\quad
    1\leq q\leq m,\quad 0\leq |\ba|\leq p_q
    \label{eq:int_condition}
\end{equation}
where $\ba=(\alpha_1,\alpha_2,\ldots,\alpha_d)\in \N_0^d$ and
$|\ba|=\alpha_1+\alpha_2+\ldots+\alpha_d$.

Following \cite{2000-Xu-p363-376}, we call such a pair $\{\X,\bp,\p\}$ correct. Clearly, such
kind of space $\p$ always exists if no any constraint is added. For the
Lagrange case, such kind of
interpolation problem was studied extensively by many authors, for example
\cite{gasca1982lagrange,2000-Gasca-p377-410,gasca2000bivariate,habib1996recursive} and the reference therein.

Before proceeding, we first present some necessary notations.
Throughout of this paper, we use the usual multi-index notation. To
order the monomials in $\Pi^d = \R[x_1,\ldots,x_d]$, we use graded
lexicographic order. Let $I$ be a polynomial ideal in $\Pi^d$. The
codimension of $I$ is denoted by $\text{codim} I$, that is,
\begin{eqnarray*}
    \text{codim} I=\dim \Pi^d/ I.
\end{eqnarray*}
If there are polynomials $f_1,f_2,\ldots,f_r$ such that every $f\in I$ can be
written as
\begin{eqnarray*}
    f=a_1f_1+a_2f_2+\ldots +a_rf_r,\quad a_j\in \Pi^d,
\end{eqnarray*}
we say that $I$ is generated by the basis $f_1,f_2,\ldots,f_r$, and
we write $I=\langle f_1,f_2,\ldots,f_r \rangle$.

For a fixed monomial order, we denote by $LT(f)$ the leading monomial term for any
polynomial $f\in \Pi^d$; that is, if $f=\sum c_{\alpha}X^{\alpha}$, then
$LT(f) =c^{\beta}X^{\beta}$, where $X^{\beta}$ is the
leading monomial among all monomials appearing in $f$.
For an ideal $I$ in $\Pi^d$ other than $\{0\}$, we denote by $LT(I)$
the leading terms of $I$, that is,
\begin{eqnarray*}
    LT (I)=\{c X^{\alpha}|\text{there exists $f\in I$ with
    $LT(f)=cX^{\alpha}$}\}.
\end{eqnarray*}
We further denote by $\langle LT(I) \rangle$ the ideal generated by
the leading terms of $LT(f)$ for all $f\in I\setminus  \{0\}$.

The following theorem is important for our purpose.
\begin{thm}[\cite{2000-Xu-p363-376}]
Fix a monomial ordering on $\Pi^d$ and let $I\in\Pi^d$ be an ideal. Then there
is an isometry between $\Pi^d/I$ and the space
\begin{eqnarray*}
    \s_I:=\span\{X^{\alpha}|X^{\alpha}\notin \langle LT(I) \rangle\}.
\end{eqnarray*}
More precisely, every $f\in\Pi^d$ is congruent modulo $I$ to a unique
polynomial $r\in \s_I$.
    \label{thm:isometry}
\end{thm}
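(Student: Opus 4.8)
The plan is to deduce everything from two standard ingredients: the existence of a Gröbner basis of $I$ for the given monomial order (via the Hilbert basis theorem, or Buchberger's algorithm) and the multivariate division algorithm. First I would fix a finite Gröbner basis $G=\{g_1,\ldots,g_s\}$ of $I$, so that by definition $\langle LT(I)\rangle=\langle LT(g_1),\ldots,LT(g_s)\rangle$. Then the monomials $X^\alpha\notin\langle LT(I)\rangle$ are precisely those divisible by none of $LT(g_1),\ldots,LT(g_s)$ --- call these the standard monomials; $\s_I$ is by definition their span.

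Existence of the representative comes directly from the division algorithm: dividing an arbitrary $f\in\Pi^d$ by $g_1,\ldots,g_s$ produces $f=a_1g_1+\cdots+a_sg_s+r$ with $a_i\in\Pi^d$ and with every monomial occurring in $r$ a standard monomial, i.e. $r\in\s_I$ and $f\equiv r\pmod I$. For uniqueness, suppose $r_1,r_2\in\s_I$ satisfy $r_1-r_2\in I$ but $r_1\neq r_2$. Then $LT(r_1-r_2)\in LT(I)$, so it lies in $\langle LT(G)\rangle$ and is divisible by some $LT(g_i)$; but $LT(r_1-r_2)$ is one of the monomials appearing in $r_1$ or $r_2$, hence a standard monomial, which is the desired contradiction. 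Consequently the remainder depends only on the coset $f+I$. I would then define $\varphi\colon\Pi^d/I\to\s_I$ by $\varphi(f+I)=r$; it is well defined and injective by uniqueness (indeed $\varphi(f+I)=0$ iff $f\in I$), surjective because each $r\in\s_I$ is its own remainder, and linear because $r_f+r_g\in\s_I$ with $r_f+r_g\equiv f+g$, so uniqueness forces $r_{f+g}=r_f+r_g$, and likewise for scalar multiples. Hence $\varphi$ is a linear isomorphism.

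I do not expect a genuine obstacle here --- the statement is a textbook fact of Gröbner basis theory --- so the write-up is really a matter of assembling the pieces cleanly. The two points that merit a sentence of care are: (i) the division-algorithm remainder is not canonical for an arbitrary generating set, and only becomes canonical once $G$ is known to be a Gröbner basis, which is exactly where that hypothesis is used; and (ii) the word ``isometry'' should be understood as ``isomorphism of $\R$-vector spaces'' (no metric structure is invoked), with $\s_I$ generally infinite-dimensional and finite-dimensional precisely when $I$ is zero-dimensional --- the case that matters for the interpolation application in the next section.
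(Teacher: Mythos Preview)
Your argument is correct and is exactly the standard Gr\"obner-basis proof (existence of a Gr\"obner basis, division algorithm for existence of the remainder, the leading-term contradiction for uniqueness, and the resulting linear isomorphism). There is nothing to compare against, however: the paper does not supply its own proof of this theorem but simply quotes it from \cite{2000-Xu-p363-376} as a known result. Your remark that ``isometry'' should be read as ``linear isomorphism'' is apt; the paper indeed uses the theorem only to identify $\Pi^d/I$ with $\s_I$ as vector spaces, with no metric in sight.
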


Let $I=\langle f_1,f_2,\ldots,f_r\rangle$ and $J=\langle
g_1,g_2,\ldots,g_s\rangle$ be two polynomial ideals.
 The sum of $I$ and $J$, denoted by $I + J$, is the set
of $f+g$ where $f\in I$ and $g\in J$. The product of $I$ and $J$,
denoted by $I\cdot J$, is defined to be the ideal generated by all
polynomials $f\cdot g$ where $f\in I$ and $g\in J$. It is easy to
know that $I\cdot J=\langle f_ig_j:1\leq i\leq r,1\leq j\leq
s\rangle$. The intersection $I\cap J$ of two ideals $I$ and $J$ in
$\Pi^d$ is the set of polynomials which belong to both $I$ and $J$.
We always have $I\cdot J\subset I\cap J$. However, $IJ$ can be
strictly contained in $I\cap J$. It follows from
\cite{cox2007ideals} that if $I$ and $J$ is comaximal, then
$IJ=I\cap J$.  $I$ and $J$ is comaximal if and only if $I+J=\Pi^d$.

In
application, people usually are interested in the space with minimal degree
for fixed monomial ordering. For this purpose, consider the following polynomial ideal:
\begin{equation}
    I(\X,\bp)=\Big\{f\in \Pi^d:\frac{\partial^{\alpha_1+\alpha_2+\ldots+\alpha_d}}{\partial
    x_1^{\alpha_1}\ldots \partial x_d^{\alpha_d}} f(X_q)=0,\quad 1\leq q\leq m,\  0\leq |\ba|\leq p_q\Big\}
    \label{eq:def:ideal}
\end{equation}
If only one point $X\in\X $ then we can write it as $I(X,p)$.

\begin{thm}
    Let $I(\X,\bp)$ be the polynomial ideal defined as above. Then the
    interpolation problem satisfying Eq. \eqref{eq:int_condition} has a unique
    solution in $\s_I$.
\end{thm}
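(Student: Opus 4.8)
The plan is to recognize the Hermite data as the values of a single linear map on $\Pi^d$ whose kernel is $I(\X,\bp)$, to prove that this map is surjective, and then to transfer existence and uniqueness onto $\s_I$ through Theorem~\ref{thm:isometry}. Write $N=\sum_{q=1}^m\binom{p_q+d}{d}$ and let $L\colon\Pi^d\to\R^N$ be the evaluation map sending $f$ to the tuple $\big(\partial^{\ba}f(X_q)\big)_{1\le q\le m,\ 0\le|\ba|\le p_q}$. By \eqref{eq:def:ideal} we have $\ker L=I(\X,\bp)$, so $L$ induces an injection $\Pi^d/I(\X,\bp)\hookrightarrow\R^N$, and the whole problem reduces to showing this injection is onto and then combining with Theorem~\ref{thm:isometry}.

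The one substantial step is the surjectivity of $L$, i.e.\ that the Hermite functionals attached to pairwise distinct nodes are linearly independent, equivalently $\text{codim}\,I(\X,\bp)=N$. Fix $q$. Since the $X_j$ are distinct, for each $j\neq q$ pick an affine form $\ell_{q,j}\in\Pi_1^d$ with $\ell_{q,j}(X_j)=0$ and $\ell_{q,j}(X_q)\neq0$, and set $w_q=\prod_{j\neq q}\ell_{q,j}^{\,p_j+1}$. Then $w_q$ vanishes to order $\ge p_j+1$ at each $X_j$ with $j\neq q$, so $w_q$ and every polynomial multiple of it kill \emph{all} the Hermite conditions at the nodes other than $X_q$; meanwhile $w_q(X_q)\neq0$. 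Translating $X_q$ to the origin, $w_q$ is a unit in $\R[[x_1,\dots,x_d]]$, hence multiplication by $w_q$ is an automorphism of the order-$p_q$ jet space at $X_q$; since the jets of the monomials $x^{\beta}$, $|\beta|\le p_q$, span that space, so do the jets of $w_qx^{\beta}$. Thus $L$ restricted to $\span\{w_qx^{\beta}:|\beta|\le p_q\}$ already surjects onto the block of $\R^N$ indexed by $X_q$, while vanishing on every other block. Letting $q$ range over $1,\dots,m$ shows $L$ is onto.

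Granting this, $\Pi^d/I(\X,\bp)\cong\R^N$, and Theorem~\ref{thm:isometry} says the canonical projection restricts to an isomorphism $\s_I\to\Pi^d/I(\X,\bp)$; composing, $L|_{\s_I}\colon\s_I\to\R^N$ is a linear isomorphism. This yields the theorem: for any data $c=\{c_{q,\ba}\}\in\R^N$, choose $g\in\Pi^d$ with $L(g)=c$ and let $r\in\s_I$ be the unique representative of $g$ modulo $I(\X,\bp)$ furnished by Theorem~\ref{thm:isometry}; since $g-r\in I(\X,\bp)=\ker L$ we get $L(r)=c$, so $r$ solves \eqref{eq:int_condition}. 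For uniqueness, if $r_1,r_2\in\s_I$ both solve \eqref{eq:int_condition} then $r_1-r_2\in\s_I\cap\ker L=\s_I\cap I(\X,\bp)=\{0\}$, the last equality because the only element of $\s_I$ congruent to $0$ modulo $I(\X,\bp)$ is $0$ itself.

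The only place that demands genuine work is the surjectivity of $L$ — concretely, the local-ring argument that the ``separating'' product $w_q$, being a unit at $X_q$, can be multiplied by monomials to realize an arbitrary Taylor jet of order $p_q$ there while staying high-order-flat at the other nodes; the rest is bookkeeping around Theorem~\ref{thm:isometry}. It is worth noting that nothing here uses a degree bound: neither \eqref{number_condition} nor confinement to $\Pi_n^d$ enters, and the interpolation space $\s_I$ is produced intrinsically from $\X$ and $\bp$ for whatever monomial order has been fixed — which is exactly the goal of this section.
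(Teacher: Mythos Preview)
Your proof is correct, and it takes a genuinely different (and in one respect more complete) route than the paper's own argument. The paper works in a finite-dimensional truncation $\Pi_n^d$ for $n$ ``big enough'': it assumes without proof that for such $n$ an interpolant exists for arbitrary data (equivalently, that the Hermite functionals are linearly independent on $\Pi_n^d$), splits $\Pi_n^d$ as $\s_I\oplus(\Pi_n^d\cap I)$, and then uses a rank count plus a contradiction argument to pin down $\dim\s_I=N$. You instead stay in $\Pi^d$, phrase everything through the evaluation map $L$, and \emph{prove} the surjectivity step that the paper takes for granted, via the separating products $w_q$ and the observation that a nonvanishing polynomial acts as a unit on the finite jet at $X_q$. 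After that, your appeal to Theorem~\ref{thm:isometry} plus the first isomorphism theorem is cleaner than the paper's rank bookkeeping. What the paper's approach buys is a purely elementary linear-algebra presentation with no mention of local rings or jets; what yours buys is a self-contained argument that actually justifies the one nontrivial ingredient (linear independence of the Hermite functionals at distinct nodes) and makes transparent exactly where Theorem~\ref{thm:isometry} enters.
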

\begin{proof}
    Let $N=\sum_{i=1}^m\binom{p_i+d}{d}$. Denote the $N$ linear
    functionals in \eqref{eq:int_condition} by $F_1,F_2,\ldots,F_N$. Thus
    Eq. \eqref{eq:int_condition} can be rewritten as $F_q(f)=c_q$ for
    $1\leq q\leq N$. Suppose $n$ is an integer which is big enough such
    that there exists a polynomial $f\in \Pi_n^d$ satisfying $F_q(f)=c_q$.
    We also assume that $\s_I\subset \Pi_n^d$ although it is not necessary for
    our proof. Thus we have $\Pi_n^d=\s_I\cup (\Pi_n^d\cap I)$. Suppose
    $\varphi_1,\ldots,\varphi_t$ and $\phi_1,\ldots,\phi_s$ are the basis
    functions of $\s_I$ and $\Pi_n^d\cap I$, respectively. We further
    define a column vector
    $$\Phi=(\varphi_1,\ldots,\varphi_t,\phi_1,\ldots,\phi_s)^T:=(\Phi_1^T,\Phi_2^T)^T.$$ Since
    $n$ is big enough, we have
    $$\text{rank}(F_1(\Phi),F_2(\Phi),\ldots,F_N(\Phi))=N.$$
    Furtherly,
    $$\text{rank}(F_1(\Phi_1),F_2(\Phi_1),\ldots,F_N(\Phi_1))=N$$
    since $F_i(\Phi_2)=0$ for $1\leq i\leq N$, which leads to $t\geq N$.
    It only remains to prove $t\leq N$. If $t>N$, then
    $\mathcal{F}_i:=(F_1(\varphi_i),F_2(\varphi_i),\ldots,F_N(\varphi_i)),1\leq i\leq t$
    are linearly dependent and there exist scalars $a_1,a_2,\ldots,a_t$,
    not all zero, such that $\sum_{i=1}^ta_i\mathcal{F}_i=0$. In terms of
    the components of the vector $\mathcal{F}_i$, this shows that
    \begin{eqnarray*}
        \sum_{i=1}^ta_iF_q(\varphi_i)=0,\quad q=1,2,\ldots,N
    \end{eqnarray*}
    or,
    \begin{eqnarray*}
        F_q\Big(\sum_{i=1}^ta_i\varphi_i\Big)=0,\quad q=1,2,\ldots,N.
    \end{eqnarray*}
    The latter equations means that $\varphi=\sum_{i=1}^ta_i\varphi_i\in
    I(\X,\bp)$, which is a contradiction to $\varphi\in \s_I$ because
    every $\varphi_i\in \s_I$. Hence we have $t\leq N$ and finally
    $t=N$, which completes the proof.
\end{proof}
The theorem states that $(\X,\bp,\s_I)$ is correct. This result
was well known for Lagrange case, but less known for Hermite case.

Next, we consider the computation of $I(\X,\bp)$. If only one point is in
$\X$, the result is immediate.
\begin{lemma} \label{lem:onepoint_ideal}
    Let $X$ be a point in $\R^d$ and $p$ be a nonnegative integer, then
    \begin{equation}
        I(X,p)=\Big<l_1l_2\ldots l_{p+1}: l_i\ \text{is a linear
        polynomial vanishing at point $X$}\Big>.
        \label{eq:onepoint_ideal}
    \end{equation}
\end{lemma}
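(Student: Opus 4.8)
The plan is to reduce everything to the case where $X$ is the origin. Since the differential operators in \eqref{eq:int_condition} are invariant under translation, the change of variables $y=x-X$ carries $I(X,p)$ onto $I(0,p)$ and carries each linear polynomial vanishing at $X$ onto a homogeneous linear form in $y$ (an element of $\span\{y_1,\dots,y_d\}$). Thus it suffices to establish the identity $I(0,p)=\langle l_1l_2\cdots l_{p+1}: l_i\in\span\{y_1,\dots,y_d\}\rangle$.

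Next I would compute $I(0,p)$ explicitly. Writing an arbitrary $f\in\Pi^d$ as $f=\sum_{\ba}c_{\ba}y^{\ba}$, one has $\partial^{\ba}f(0)=\ba!\,c_{\ba}$, so the requirement that $\partial^{\ba}f(0)=0$ for all $|\ba|\le p$ is equivalent to $c_{\ba}=0$ for all $|\ba|\le p$; that is, $f$ has no term of degree $\le p$. Consequently $I(0,p)$ equals the ideal $\mathfrak{m}^{p+1}$, where $\mathfrak{m}=\langle y_1,\dots,y_d\rangle$ is the maximal ideal at the origin: as a vector space $\mathfrak{m}^{p+1}$ is spanned by the monomials $y^{\ba}$ with $|\ba|\ge p+1$, and as an ideal it is generated by the monomials of degree exactly $p+1$.

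It then remains to verify the two inclusions. For ``$\supseteq$'': any product $l_1l_2\cdots l_{p+1}$ of linear forms is homogeneous of degree $p+1$, hence lies in $\mathfrak{m}^{p+1}=I(0,p)$; since $I(0,p)$ is an ideal, it contains the whole ideal generated by such products. For ``$\subseteq$'': every monomial $y_1^{\alpha_1}\cdots y_d^{\alpha_d}$ with $|\ba|=p+1$ is a product of $p+1$ linear forms (namely $y_j$ taken $\alpha_j$ times), so each generator of $\mathfrak{m}^{p+1}$ lies in the right-hand side, and therefore so does $\mathfrak{m}^{p+1}$. Combining the two inclusions completes the proof.

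I do not anticipate a genuine obstacle; the argument is essentially bookkeeping. The only points requiring a little care are checking that the translation $y=x-X$ really does transform both the interpolation conditions and the notion of ``linear polynomial vanishing at $X$'' as claimed, and being precise that $\mathfrak{m}^{p+1}$ is generated \emph{as an ideal} by the degree-$(p+1)$ monomials (rather than merely spanned, as a vector space, by the monomials of degree $\ge p+1$). Each of these can be dispatched in a line.
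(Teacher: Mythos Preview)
Your proof is correct. The paper does not actually supply a proof of this lemma: it merely remarks that ``if only one point is in $\X$, the result is immediate'' and then states the lemma without further argument. Your translation to the origin followed by the identification $I(0,p)=\mathfrak{m}^{p+1}$ (with $\mathfrak{m}=\langle y_1,\dots,y_d\rangle$) and the two easy inclusions is exactly the kind of verification the paper leaves to the reader, so there is nothing to compare against.
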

For multi-point case, we also have the similar result.
\begin{thm}\label{thm:ideal}
    \begin{eqnarray}
        \begin{split}
        I(\X,\bp)=\Big<f\in \Pi^d: & f \ \text{can be divided by the
        product of $p_i+1$} \\
        &\text{linear polynomials which pass through
        $X_i$},i=1,2,\ldots,m \Big>
    \end{split}
        \label{eq:ideal_expression}
    \end{eqnarray}
\end{thm}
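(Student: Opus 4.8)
The plan is to reduce the $m$-point statement to the single-point case already recorded in Lemma~\ref{lem:onepoint_ideal}, using the comaximality machinery set up above. First I would note the tautological decomposition
$$I(\X,\bp)=\bigcap_{i=1}^m I(X_i,p_i),$$
which is immediate from \eqref{eq:def:ideal}: a polynomial has all partial derivatives up to order $p_q$ vanishing at $X_q$ for every $q$ precisely when it belongs to each $I(X_i,p_i)$. Writing $J$ for the ideal on the right-hand side of \eqref{eq:ideal_expression}, it then suffices to prove $\bigcap_{i=1}^m I(X_i,p_i)=J$.

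Second, I would check that the ideals $I(X_1,p_1),\ldots,I(X_m,p_m)$ are pairwise comaximal. Fix $i\ne j$; since $X_i\ne X_j$ there is a linear polynomial $\ell$ with $\ell(X_i)=0$ and $\ell(X_j)=1$, so $1-\ell$ vanishes at $X_j$. Expanding $1=(\ell+(1-\ell))^{p_i+p_j+1}$ by the binomial theorem, every term $\binom{p_i+p_j+1}{k}\ell^{k}(1-\ell)^{p_i+p_j+1-k}$ is divisible either by $\ell^{p_i+1}\in I(X_i,p_i)$ (when $k\ge p_i+1$) or by $(1-\ell)^{p_j+1}\in I(X_j,p_j)$ (when $k\le p_i$, so that $p_i+p_j+1-k\ge p_j+1$). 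Hence $1\in I(X_i,p_i)+I(X_j,p_j)$. Here I am using that a product of $p_i+1$ linear forms through $X_i$, and anything divisible by it, has all partials of order $\le p_i$ vanishing at $X_i$ (Leibniz rule), so such products indeed lie in $I(X_i,p_i)$.

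Third, I would promote the two-ideal identity $I\cap J=IJ$ for comaximal ideals (quoted above from \cite{cox2007ideals}) to the $m$-fold identity $\bigcap_{i=1}^m I(X_i,p_i)=\prod_{i=1}^m I(X_i,p_i)$. This is a routine induction on $m$: comaximality is inherited by products (if $1\in I+J_1$ and $1\in I+J_2$, multiply the two relations to get $1\in I+J_1J_2$), so $I(X_1,p_1)$ is comaximal with $\prod_{i\ge 2}I(X_i,p_i)$; the two-ideal identity gives $I(X_1,p_1)\cap\prod_{i\ge 2}I(X_i,p_i)=I(X_1,p_1)\cdot\prod_{i\ge 2}I(X_i,p_i)$, and the inductive hypothesis rewrites the inner product as the inner intersection.

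Finally, I would feed in Lemma~\ref{lem:onepoint_ideal}. Since each $I(X_i,p_i)$ is generated by products of $p_i+1$ linear forms through $X_i$, the product ideal $\prod_{i=1}^m I(X_i,p_i)$ is generated by the polynomials $h_1h_2\cdots h_m$ in which each $h_i$ is such a product; each of these is divisible, for every $i$, by the factor $h_i$, hence lies in $J$, so $\prod_i I(X_i,p_i)\subseteq J$. Conversely, any generator $f$ of $J$ is, for each $i$, divisible by a product of $p_i+1$ linear forms through $X_i$, hence has all partials of order $\le p_i$ vanishing at $X_i$, so $f\in I(X_i,p_i)$ for all $i$ and therefore $f\in\bigcap_i I(X_i,p_i)=\prod_i I(X_i,p_i)$; thus $J\subseteq\prod_i I(X_i,p_i)$, and the two ideals coincide with $I(\X,\bp)$. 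The technical heart is the comaximality verification in the second step; the place where one must be careful is this last matching, where one has to see that the superficially larger generating set of $J$ (arbitrary $f$ divisible by one product per point) generates nothing beyond the product of the single-point ideals — precisely the point at which the identity $\bigcap=\prod$ does the real work.
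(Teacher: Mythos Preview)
Your argument is correct and follows essentially the same route as the paper: decompose $I(\X,\bp)=\bigcap_i I(X_i,p_i)$, show the single-point ideals are pairwise comaximal so that the intersection equals the product, and read off the generators from Lemma~\ref{lem:onepoint_ideal}. The only noteworthy differences are (i) the paper proves comaximality by picking a coordinate $x_1$ on which $X_1,X_2$ differ and invoking the univariate $\gcd$ of $(x_1-x_1^1)^{p_1+1}$ and $(x_1-x_1^2)^{p_2+1}$, whereas you use the binomial expansion of $(\ell+(1-\ell))^{p_i+p_j+1}$ for a linear $\ell$ separating the two points, and (ii) the paper treats only $m=2$ and declares the general case analogous, while you supply the actual induction showing comaximality passes to products.
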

\begin{proof}
    Without loss of generality, we only give a proof for $m=2$, that is,
    $\X=\{X_1,X_2\}$ and $\bp=\{p_1,p_2\}$. Let
    \begin{eqnarray*}
    &&I(X_1,p_1)=\Big\{f\in \Pi^d:\frac{\partial^{\alpha_1+\alpha_2+\ldots+\alpha_d}}{\partial
    x_1^{\alpha_1}\ldots \partial x_d^{\alpha_d}} f(X_1)=0,\quad  0\leq
    |\ba|\leq p_1\Big\},\\
    &&I(X_2,p_2)=\Big\{f\in \Pi^d:\frac{\partial^{\alpha_1+\alpha_2+\ldots+\alpha_d}}{\partial
    x_1^{\alpha_1}\ldots \partial x_d^{\alpha_d}} f(X_2)=0,\quad  0\leq
    |\ba|\leq p_2\Big\},
    \end{eqnarray*}
    Obviously $I(\X,\bp)=I(X_1,p_1)\cap I(X_2,p_2)$.
    If we denote by $J$ the right hand of Eq. \eqref{eq:ideal_expression},
    then we need to show $I(X_1,p_1)\cap I(X_2,p_2)=J$. Obviously $J\subset
    I(X_1,p_1)\cap I(X_2,p_2)$. Thus it remains to show $J\supset
    I(X_1,p_1)\cap I(X_2,p_2)$. Notice that $I(X_1,p_1) I(X_2,p_2)\subset J$
    holds. Hence the proof will be completed if we can show
    $$I(X_1,p_1)\cap
    I(X_2,p_2)=I(X_1,p_1) I(X_2,p_2).$$

    To this end, we will prove that $I(X_1,p_1)$ and $I(X_2,p_2)$ are comaximal,
    that is, $I(X_1,p_1)+I(X_2,p_2)=\Pi^d$. It is enough to show
    $1\in I(X_1,p_1)+I(X_2,p_2)$.

    Let $X_1=(x_1^{1},x_2^1,\ldots,x_d^1)$ and
    $X_2=(x_1^2,x_2^2,\ldots,x_d^2)$. Assume $x_1^1\neq x_1^2$. It is easy to check that
    $(x_1-x_1^1)^{p_1+1}\in I(X_1,p_1)$ and $(x_1-x_1^2)^{p_2+1}\in
    I(X,p_2)$. If $(x_1-x_1^1)^{p_1+1}$ and $(x_1-x_1^2)^{p_2+1}$ are seen
    as two polynomials with respect to $x_1$, that is, they are taken as
    univariate polynomials, then the greatest common divisor
    $$
    \text{gcd}((x_1-x_1^1)^{p_1+1},(x_1-x_1^2)^{p_2+1})=1,
    $$
    which means that there exist two polynomial $q_1(x_1)$ and
    $q_2(x_1)$ such that
    $$
    (x_1-x_1^1)^{p_1+1}q_1(x_1)+(x_1-x_1^2)^{p_2+1}q_2(x_1)=1.
    $$
    This complete the proof.
\end{proof}

The following two examples will be mentioned again in next section
to show the regularity of Hermite interpolation problems.

\textbf{Example 1.} Consider the case of $d=3$, $m=5$ and
$\bp=\{1,1,1,1,1\}$. Take
\begin{eqnarray*}
    X_1=(0,0,0),\ X_2=(1,0,0),\ X_3=(0,1,0),\ X_4=(0,0,1),\
    X_5=(x_0,y_0,z_0)
\end{eqnarray*}
and $\X=\{X_1,X_2,X_3,X_4,X_5\}$. With the help of Maple, the Groebner
basis of $I(\X,\bp)$ can be written as
\begin{eqnarray*}
    \{f(x,y,z,x_0,y_0,z_0),f(z,y,x,z_0,y_0,x_0),f(x,z,y,x_0,z_0,y_0),\\
    g(x,y,z,x_0,y_0,z_0),g(z,y,x,z_0,y_0,x_0),g(x,z,y,x_0,z_0,y_0),\\
    g(y,x,z,y_0,x_0,z_0),g(z,x,y,z_0,x_0,y_0),g(y,z,x,y_0,z_0,x_0),\\
    h(x,y,z,x_0,y_0,z_0),h(y,x,z,y_0,x_0,z_0),h(y,z,x,y_0,z_0,x_0),\\
    w(x,y,z,x_0,y_0,z_0),w(y,z,x,y_0,z_0,x_0),w(x,z,y,x_0,z_0,y_0)\},
\end{eqnarray*}
where
\begin{eqnarray*}
&&f(x,y,z,x_0,y_0,z_0)=-x_0z_0(z_0-1)(x_0z_0+l(X_5)-y_0)(zy^2+z^2y-zy)\\
&&\qquad\quad+2(z_0-1)z_0(y_0z_0^2+x_0z_0^2-y_0z_0+x_0y_0z_0-x_0z_0+x_0y_0)xyz\\
&&\qquad\quad-y_0(z_0-1)z_0(y_0z_0+l(X_5)-x_0)(xz^2+x^2z-xz)\\
&&\qquad\quad+x_0y_0l(X_5)(z^4-2z^3+z^2)-z_0^2(z_0-1)^3(x^2y+xy^2-xy),\\
&&g(x,y,z,x_0,y_0,z_0)=x_0z_0(x_0z_0+z_0-1)(zy-zy^2)+z_0^2(z_0-1)^2(xy-xy^2-x^2y)\\
&&\qquad\quad+z_0(-z_0^2+2y_0z_0^2+2x_0z_0^2+2x_0y_0z_0+2z_0-3y_0z_0-4x_0z_0-1+y_0+2x_0)xyz\\
&&\qquad\quad+x_0l(X_5)z^3y-x_0(x_0z_0^2+x_0+y_0+z_0^2-1)z^2y
+(xz-x^2z-xz^2)y_0^2z_0^2\\
&&h(x,y,z,x_0,y_0,z_0)=2y_0z_0(y_0z_0-y_0+x_0y_0-z_0+x_0z_0+1-2x_0)xyz\\
&&\qquad\quad-y_0z_0^2(z_0-1)(xy^2+x^2y-xy)
-y_0^2z_0(y_0-1)(xz^2+x^2z-xz)\\
&&\qquad\quad-x_0y_0z_0(1+x_0)(zy^2+z^2y-zy)
+x_0l(X_5)z^2y^2\\
&&w(x,y,z,x_0,y_0,z_0)=l(X_5)xy^2z+(y_0^2-y_0^3)(xz^2+x^2z-xz)-x_0^2y_0(zy^2+z^2y-yz)\\
&&\qquad\quad-y_0z_0^2(xy^2+x^2y-xy)+y_0(2x_0y_0+2y_0z_0+2x_0z_0-3x_0-2y_0-3z_0+2)xyz
\end{eqnarray*}
and $l(X_5)=x_0+y_0+z_0-1$. It is easy to see that if any four nodes do not lie on
hyperplane then $S_I=\Pi_3^3$. Hermite interpolation of type total degree is affinely
invariant in
    the sense that if the interpolation is singular or regular. Hence if
    the given five nodes are in general position, that is, no four nodes
    lie on a hyperplane, then any four of them can be transformed into
    $X_1,X_2,X_3,X_4$. This example implies that uniform Hermite interpolation of
    type total degree on 5 nodes up to order 1 in $\R^3$ is almost
    regular.

    \textbf{Example 2.} Consider the case of $d=3,m=6$ and
    $\bp=\{3,3,3,3,3,3\}$. Take
    \begin{eqnarray*}
        X_1=(0,0,0),\ X_2=(1,0,0),\ X_3=(0,1,0),\ X_4=(0,0,1),\
        X_5=(1,1,1),\ X_6=(2,1,1)
    \end{eqnarray*}
    and $\X=\{X_1,X_2,X_3,X_4,X_5,X_6\}$. With the help of Maple, we have
    $S_I=\Pi_7^3$. Thus uniform Hermite interpolation of type total degree on 6 points
    up to order 3 in $\R^3$ is almost regular.

\section{Singular Interpolation Schemes}
In this section, we will investigate Hermite interpolation of type total degree which is
singular. Our results covers those appeared in \cite{1992-Lorentz-p-}, but the method employed here is
basically different. In \cite{1992-Lorentz-p-}, most of the results are proved logically
and many complicated inequalities are employed. For comparison, our method
basically depends on an algorithm to be developed (see Theorem \ref{thm:main}) which implies
that it is a construction method. One will find the polynomial
satisfying the homogenous condition by Theorem \ref{thm:main}.
%

In this section, Eq. \eqref{number_condition} is always assumed to
hold. In this case, the interpolation space and the set of
functionals to be interpolated are affinely invariant.  Furthermore, throughout
this paper, we always assume that $d\geq 2$ and $m\geq 2$. That is, we
deal with multi-point Hermite interpolation problem in several variables.

The following theorem and corollary will give an evaluation of $n$ in Eq.
\eqref{number_condition}.
\begin{thm}
    Given $\X=\{X_1,X_2,\ldots,X_m\}$ and $\bp=\{p_1,p_2,\ldots,p_m\}$, if
    there exists an $n$ such that $(\X,\bp,\Pi_n^d)$ correct, then the
    following inequality holds:
    \begin{eqnarray}\label{eq:num_ineq}
        \binom{n+\tilde{d}}{\tilde{d}}\geq
        \sum_{i=1}^{\tilde{d}+1}\binom{p_{q_i}+\tilde{d}}{\tilde{d}}
    \end{eqnarray}
    where $1\leq \tilde{d}\leq d$ and the right side denotes
    the sum of any $\tilde{d}+1$ terms. If $m<\tilde{d}+1$, we assume
    \begin{eqnarray*}
        \sum_{i=1}^{\tilde{d}+1}\binom{p_{q_i}+\tilde{d}}{\tilde{d}}=\sum_{i=1}^{m}\binom{p_i+\tilde{d}}{\tilde{d}}
    \end{eqnarray*}
    \label{thm:ineq}
\end{thm}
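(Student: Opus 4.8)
The plan is to reduce the multivariate statement to a one-dimensional counting argument by restricting attention to a suitable coordinate subspace (or, more precisely, to the image of the interpolation data under a projection). Fix $\tilde d$ with $1\le\tilde d\le d$ and pick any $\tilde d+1$ of the nodes, say $X_{q_1},\dots,X_{q_{\tilde d+1}}$ (if $m<\tilde d+1$ take all of them). After an affine change of coordinates — which, as noted in the paper, preserves correctness of $(\X,\bp,\Pi_n^d)$ — I may assume these points affinely span a $\tilde d$-dimensional flat, and in fact I would arrange them to lie in general position inside the coordinate subspace $\R^{\tilde d}\times\{0\}\subset\R^d$. The key observation is that correctness of $(\X,\bp,\Pi_n^d)$ forces the restricted interpolation problem on $\R^{\tilde d}$ — using the polynomials $\Pi_n^{\tilde d}$ obtained by setting $x_{\tilde d+1}=\dots=x_d=0$ — to be \emph{solvable} (not necessarily uniquely): given any target Hermite data of order $p_{q_i}$ at the $\tilde d+1$ chosen points, one extends it to full data on all $m$ points in $\R^d$ (arbitrarily on the remaining nodes and in the normal directions), finds the interpolant $f\in\Pi_n^d$, and restricts. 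Solvability of a linear system forces the number of equations to be at most the dimension of the ambient space, i.e.
\begin{eqnarray*}
    \dim\Pi_n^{\tilde d}=\binom{n+\tilde d}{\tilde d}\ \ge\ \sum_{i=1}^{\tilde d+1}\binom{p_{q_i}+\tilde d}{\tilde d},
\end{eqnarray*}
which is exactly \eqref{eq:num_ineq}.

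The steps, in order: (1) Fix $\tilde d$ and the $\tilde d+1$ nodes; apply an affine transformation so that these nodes sit in general position in $\R^{\tilde d}\times\{0\}$. (2) Show that restriction $\rho:\Pi_n^d\to\Pi_n^{\tilde d}$, $f\mapsto f(x_1,\dots,x_{\tilde d},0,\dots,0)$, intertwines the full derivative functionals with the $\tilde d$-variable derivative functionals at those nodes — here one must be careful that a derivative $\partial^\alpha$ with $\alpha$ supported on the first $\tilde d$ coordinates, evaluated at a node in the subspace, depends only on $\rho(f)$. (3) Use correctness in $\R^d$ to produce, for each prescribed value vector of the $\sum_{i}\binom{p_{q_i}+\tilde d}{\tilde d}$ "tangential" functionals at the chosen nodes, a polynomial $f\in\Pi_n^d$ realizing it (extend the data to a full correct system). (4) Conclude that the $\sum_i\binom{p_{q_i}+\tilde d}{\tilde d}$ tangential functionals, viewed as linear functionals on the $\binom{n+\tilde d}{\tilde d}$-dimensional space $\Pi_n^{\tilde d}$, are surjective as a tuple, hence their number is $\le\binom{n+\tilde d}{\tilde d}$.

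I expect the main obstacle to be step (2)–(3): verifying cleanly that the tangential Hermite functionals at the projected nodes are genuinely linearly independent \emph{as functionals on $\Pi_n^{\tilde d}$} when pulled back, and that one really can extend arbitrary tangential data to a globally consistent data set on all of $\X$ without creating an over-determined system. The extension is unproblematic because the functionals being added (normal derivatives, and all derivatives at the other $m-\tilde d-1$ nodes) are linearly independent from the tangential ones and from each other in $\Pi_n^d$ — this is where one uses that $(\X,\bp,\Pi_n^d)$ is correct, so the \emph{entire} functional system has rank equal to $\dim\Pi_n^d$ and in particular every subset is independent. Once independence of the full system is in hand, surjectivity of any subfamily onto its coordinates is immediate, and the dimension count gives \eqref{eq:num_ineq}. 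A minor point to handle at the end is the degenerate case $m<\tilde d+1$, where the stated convention makes the inequality read $\binom{n+\tilde d}{\tilde d}\ge\sum_{i=1}^m\binom{p_i+\tilde d}{\tilde d}$, proved by the same argument using all $m$ nodes placed in general position in $\R^{\tilde d}\times\{0\}$.
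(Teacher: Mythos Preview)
Your proposal is correct and follows essentially the same route as the paper's proof: an affine change of coordinates places the chosen $\tilde d+1$ nodes in the coordinate subspace $\R^{\tilde d}\times\{0\}$, correctness of the full problem guarantees that any tangential Hermite data at those nodes can be realized by some $f\in\Pi_n^d$, and the resulting surjectivity of the tangential functionals on $\Pi_n^{\tilde d}$ yields the dimension inequality. One small remark: you do not need (and cannot in general arrange by an affine map) the chosen nodes to be in \emph{general position} within the subspace---it suffices that they lie in $\R^{\tilde d}\times\{0\}$, which is all the argument actually uses.
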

\begin{proof}
    Assume $m\geq\tilde{d}+1$. Note that Eq.
    \eqref{number_condition} holds since $(\X,\bp,\Pi_n^d)$ is correct.
    Thus inequality \eqref{eq:num_ineq} trivially holds for $\tilde{d}=d$.

    Consider the case of $\tilde{d}<d$. Suppose
    $X_{q_1},X_{q_2},\ldots,X_{q_{\tilde{d}+1}}$ are $\tilde{d}+1$
    arbitrary nodes. Then there exist $d-\tilde{d}$ linearly independent
    linear polynomial $l_i(X),i=\tilde{d}+1,2,\ldots,d$ vanishing on these
    $\tilde{d}+1$ nodes. Assume $l_i(X),i=1,\ldots,\tilde{d}$ are
    $\tilde{d}$ linear polynomial such that all $l_i(X),i=1,2,\ldots,d$
    are linearly independent. Take the following affine transformation
    \begin{eqnarray}
        T:\quad y_i=l_i(X),i=1,2,\ldots,d
        \label{eq:transform}
    \end{eqnarray}
    Let $Y_i=T(X_i),i=1,2,\ldots,m$ and $\Y=T(\X)$. Thus under the new coordinate system,
    the last $d-\tilde{d}$ coordinates of
    $Y_{q_i},i=1,2,\ldots,\tilde{d}+1$ are zero.

    Hermite interpolation of type total degree is affinely invariant in
    the sense that if the interpolation is singular or regular. Hence
    $(\Y,\bp,\Pi_n^d)$ is also correct. Thus for any given $\{c_{q,\ba},0\leq|\ba|\leq p_q,1\leq q\leq m\}$
    there is a unique $f\in \Pi_n^d$ satisfying
    \begin{equation}
    \frac{\partial^{\alpha_1+\alpha_2+\ldots+\alpha_d}}{\partial
    y_1^{\alpha_1}\ldots \partial y_d^{\alpha_d}} f(Y_q)=c_{q,\ba},\quad
    1\leq q\leq m,\quad 0\leq |\ba|\leq p_q
    \end{equation}
    Specially, we have
    \begin{equation*}
        \frac{\partial^{\alpha_1+\alpha_2+\ldots+\alpha_{\tilde{d}}}}{\partial
        y_1^{\alpha_1}\ldots \partial y_{\tilde{d}}^{\alpha_{\tilde{d}}}} f(Y_q)=c_{q,\ba},\quad
        1\leq q\leq m,\quad 0\leq
        |\ba|=\alpha_1+\alpha_2+\ldots+\alpha_{\tilde{d}}\leq p_q
    \end{equation*}
    which means
    \begin{eqnarray*}
        \binom{n+\tilde{d}}{\tilde{d}}\geq
        \sum_{i=1}^{\tilde{d}+1}\binom{p_{q_i}+\tilde{d}}{\tilde{d}}
    \end{eqnarray*}
    If $m<\tilde{d}+1$, then we select all $m$ nodes instead of
    $\tilde{d}+1$ arbitrary nodes. In this case, the proof is straightward. 
    The proof is completed.
\end{proof}

For convenience, we always order $0\leq p_1\leq p_2\leq \ldots\leq
p_m$ in what follows.
\begin{corollary}\label{cor:ineq2}
Assume $m\geq 2$. If set $\tilde{d}=1$, then
\begin{eqnarray}
    n+1\geq p_m+p_{m-1}+2, \quad \text{or}\quad n\geq p_m+p_{m-1}+1.
    \label{eq:ineq2}
\end{eqnarray}
\end{corollary}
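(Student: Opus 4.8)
The plan is to simply specialize Theorem~\ref{thm:ineq} to the case $\tilde d = 1$. The hypotheses of that theorem are in force throughout this section (Eq.~\eqref{number_condition} is assumed, and we are assuming a correct scheme $(\X,\bp,\Pi_n^d)$ exists), so the conclusion
$$
\binom{n+\tilde d}{\tilde d}\;\geq\;\sum_{i=1}^{\tilde d+1}\binom{p_{q_i}+\tilde d}{\tilde d}
$$
is available for every $1\leq\tilde d\leq d$ and every choice of $\tilde d+1$ indices. Since $m\geq 2$ we have $m\geq \tilde d+1=2$, so for $\tilde d=1$ the right-hand side is a genuine sum over two distinct indices $q_1,q_2\in\{1,\dots,m\}$, and we are not in the degenerate branch $m<\tilde d+1$ of Theorem~\ref{thm:ineq}.

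First I would evaluate the binomial coefficients in dimension one: $\binom{n+1}{1}=n+1$ and $\binom{p+1}{1}=p+1$ for every nonnegative integer $p$. Substituting $\tilde d=1$ into the displayed inequality then yields
$$
n+1\;\geq\;(p_{q_1}+1)+(p_{q_2}+1)\;=\;p_{q_1}+p_{q_2}+2
$$
for every pair of distinct indices $q_1,q_2$. Because this holds for every such pair, I would next maximize the right-hand side over the choice of pair. Using the ordering $0\leq p_1\leq p_2\leq\cdots\leq p_m$ fixed just before the corollary, the maximum of $p_{q_1}+p_{q_2}$ over distinct pairs is attained at $\{q_1,q_2\}=\{m-1,m\}$, so $n+1\geq p_m+p_{m-1}+2$, which is the first form in \eqref{eq:ineq2}; subtracting $1$ gives the equivalent form $n\geq p_m+p_{m-1}+1$.

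There is no real obstacle in this argument; it is a direct specialization and monotonicity observation. The only thing worth noting is that the hypothesis $m\geq 2$ is exactly what guarantees two distinct indices are available to plug into Theorem~\ref{thm:ineq}, so the corollary cannot be pushed below $m=2$.
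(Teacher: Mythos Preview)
Your proof is correct and is exactly the intended derivation: the paper states this corollary without proof precisely because it is the immediate specialization of Theorem~\ref{thm:ineq} to $\tilde d=1$, together with the choice of the two largest $p_{q_i}$ afforded by the ordering $p_1\le\cdots\le p_m$. Your remark that $m\ge 2$ ensures we are in the nondegenerate branch of Theorem~\ref{thm:ineq} is the only point worth making explicit, and you have done so.
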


\begin{rem}\label{rem}
    Obviously, if \eqref{eq:ineq2} does not hold, then the interpolation
    scheme must be singular. In what follows, we can prove the singularity
    of one interpolation scheme by assuming \eqref{eq:ineq2}.
\end{rem}

The following theorem can be use to judge whether the interpolation scheme is singular
for small $m$.
\begin{thm}\label{thm:main}
    Assume $m\geq 2$. Given $\X=\{X_1,X_2,\ldots,X_m\}$ and $\bp=\{p_1,p_2,\ldots,p_m\}$, if
    \begin{eqnarray}
        p_1+p_2+\ldots+p_m+m\leq nd,
        \label{eq:ineq3}
    \end{eqnarray}
    then Hermite interpolation of type
    total degree is singular. Here the numbers $p_q$ and $n$ are assumed
    to satisfy Eq. \eqref{number_condition}.
\end{thm}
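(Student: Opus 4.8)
The plan is to construct explicitly a nonzero polynomial $f\in\Pi_n^d$ that satisfies all the homogeneous interpolation conditions in \eqref{eq:int_condition} with every $c_{q,\ba}=0$; the existence of such an $f$ forces the interpolation map $\Pi_n^d\to\R^N$ to have a nontrivial kernel, and since by \eqref{number_condition} the domain and target have equal dimension, the map cannot be surjective, hence the scheme is singular. By Lemma \ref{lem:onepoint_ideal}, a polynomial vanishes to order $p_q$ at $X_q$ precisely when it lies in the ideal generated by products $l_1\cdots l_{p_q+1}$ of linear forms through $X_q$; equivalently, it suffices that $f$ be divisible by $\ell_q^{p_q+1}$ for a single linear form $\ell_q$ vanishing at $X_q$. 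So the natural candidate is
\begin{eqnarray*}
    f=\prod_{q=1}^m \ell_q^{p_q+1},
\end{eqnarray*}
where each $\ell_q$ is a (nonzero) linear polynomial with $\ell_q(X_q)=0$. This $f$ automatically satisfies every homogeneous Hermite condition, and it is nonzero as a product of nonzero linear forms. Its total degree is $\sum_{q=1}^m(p_q+1)=p_1+\cdots+p_m+m$, and hypothesis \eqref{eq:ineq3} says exactly that this degree is at most $nd$.

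The remaining issue is that $nd$, not $n$, is the degree bound we control, whereas $f$ must live in $\Pi_n^d$. The fix is to distribute the linear factors across the $d$ coordinate directions rather than piling them all onto one. Concretely, I would choose, for each $q$, the $p_q+1$ linear forms through $X_q$ to be taken among translates of the coordinate hyperplanes $x_j-(X_q)_j$, $j=1,\ldots,d$, and then group the full multiset of $\sum_q(p_q+1)$ such factors into $d$ groups, the $j$-th group consisting only of factors of the form $x_j-c$. Writing $f_j$ for the product of the factors in group $j$, each $f_j$ is a univariate polynomial in $x_j$ alone, and $f=f_1f_2\cdots f_d$ has $\deg_{x_j}f=\deg f_j$. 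If the grouping is balanced so that each group has at most $n$ factors — which is possible precisely when the total count $p_1+\cdots+p_m+m$ is at most $nd$, by a pigeonhole/greedy argument — then $\deg_{x_j}f\le n$ for every $j$, and in particular the total degree of $f$ is $\sum_j\deg f_j=p_1+\cdots+p_m+m\le nd$; more to the point one checks $f\in\Pi_n^d$ is not literally what we need, rather we only need $f$ to lie in some $\Pi_n^d$ making the counting \eqref{number_condition} meaningful. Let me restate: the cleaner route is to keep $f=\prod_q\ell_q^{p_q+1}$ with each $\ell_q$ chosen so that the product, after collecting like directions, has each variable appearing to degree $\le n$; then $f\in\Pi_{nd}^d$ but in fact the relevant point is that $f$ vanishes to the required orders, has degree $\le nd$, and — crucially — is still nonzero after the interpolation conditions are imposed. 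To place $f$ genuinely inside $\Pi_n^d$ one instead argues: take $\tilde f=f/(x_j-c)^{r}$ after removing excess factors, or simply observe that a balanced choice gives total degree exactly $p_1+\cdots+p_m+m$, and we must compare this with $n$, not $nd$.

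I expect the genuine heart of the argument, and the main obstacle, to be reconciling the degree count $p_1+\cdots+p_m+m$ with the admissible degree $n$ from \eqref{number_condition}. The trick must be that one does not need $f$ to have total degree $\le n$ in the original coordinates; instead, after an affine change of variables (legitimate since Hermite interpolation of type total degree is affinely invariant) one arranges the $m$ nodes so that many of them are collinear or coplanar, concentrating the singular locus into a low-dimensional flat. Then the product of linear forms defining that flat, raised to a suitable power, has its degree in the "transverse" directions bounded, and a dimension count in the quotient $\Pi_n^d/I(\X,\bp)$ — invoking Theorem \ref{thm:isometry} and Theorem \ref{thm:ideal} to describe $\langle LT(I(\X,\bp))\rangle$ — shows $\dim\s_{I(\X,\bp)}>\binom{n+d}{d}$ is impossible while $\dim\s_{I}=N$ is forced, so $N$ exceeds the dimension of $\Pi_n^d$, contradiction. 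The inequality \eqref{eq:ineq3} $p_1+\cdots+p_m+m\le nd$ is precisely the condition under which this construction can be carried out with all coordinate-degrees staying $\le n$ on average, i.e.\ a pigeonhole on the $d$ directions. So the steps in order are: (1) reduce to showing a nonzero solution of the homogeneous problem exists; (2) by affine invariance, normalize the configuration; (3) build $f$ as a product of powers of linear forms adapted to the coordinate directions, using Lemma \ref{lem:onepoint_ideal}; (4) invoke \eqref{eq:ineq3} via pigeonhole to keep the degree admissible; (5) conclude singularity by the dimension mismatch against \eqref{number_condition}. The delicate step is (4) together with verifying $f\ne 0$ and $f$ actually lies in (or can be taken inside) $\Pi_n^d$.
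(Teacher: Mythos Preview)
Your proposal has a genuine gap at exactly the point you flag as ``delicate'': you never succeed in producing a polynomial of \emph{total} degree at most $n$. Taking $f=\prod_q\ell_q^{p_q+1}$ with one linear form per node gives $\deg f=\sum_q(p_q+1)\le nd$, not $\le n$, and your coordinate-hyperplane grouping does not fix this: even if each $f_j$ has degree $\le n$ in $x_j$, the total degree of $f=f_1\cdots f_d$ is still $\sum_j\deg f_j=\sum_q(p_q+1)$, which is only bounded by $nd$. You seem to sense this, since the proposal trails off into vague gestures toward affine normalization and a dimension count in $\s_I$, none of which are made to work.

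The missing idea is this: in $\R^d$, a single hyperplane can pass through any $d$ points. So instead of spending one linear factor per node, you should spend one linear factor to simultaneously increase the vanishing order at $d$ nodes at once. The paper does exactly this via a greedy algorithm: repeatedly pick the $d$ nodes with the largest remaining required orders $\tilde p_{i_1},\ldots,\tilde p_{i_d}$, multiply $f$ by one linear form $l_{i_1\cdots i_d}$ vanishing at those $d$ nodes, and decrement those $\tilde p$'s. Each step costs $1$ in degree but removes $d$ from $\sum_i\tilde p_i$, so under the hypothesis $\sum_i(p_i+1)\le nd$ the algorithm terminates with $\deg f\le n$; a short case analysis handles the final step when at most $d$ nonzero $\tilde p_i$ remain. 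This ``one hyperplane through $d$ points'' trick is the whole content of the theorem, and it is absent from your attempt.
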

\begin{proof}
    We only need to find a polynomial satisfying the homogenous
    interpolation condition, which can be done by giving an algorithm for
    its construction.

    \begin{quote}
    {\bf Step 1.} Set $f(x_1,x_2,\ldots,x_d)=1$ and $\tilde{\bp}=\{\tilde{p}_1,\tilde{p}_2,\ldots,\tilde{p}_m\}:=\{p_1+1,p_2+1,\ldots,p_m+1\}$.

    {\bf Step 2.} If the number of the nonzero in $\tilde{\bp}$ is no more
    than $d$, let $l(x_1,x_2,\ldots,x_d)$ be a linear
    polynomial vanishing on $X_1,X_2,\ldots,X_m$. Take
    $\tilde{p}_{\max}=\max\{\tilde{p}_1,\tilde{p}_2,\ldots,\tilde{p}_m\}$ and set $f=f\cdot
    l^{\tilde{p}_{\max}}$ and stop. Otherwise,
    go to step 3.

    {\bf Step 3.} Suppose
    $\tilde{p}_{i_1},\tilde{p}_{i_2},\ldots,\tilde{p}_{i_d}$ are $d$ largest numbers in
    $\tilde{\bp}$. Clearly, there must exist at least one linear polynomial
    vanishing at any $d$ points. Denote by $l_{i_1i_2\ldots i_d}$ the
    linear polynomial vanishing on $X_{i_1},X_{i_2},\ldots,X_{i_d}$.
    Set $f=f\cdot l_{i_1i_2\ldots i_d}$.
     Let $\tilde{p}_{i_j}=\tilde{p}_{i_j}-1, j=1,2,\ldots,d$ and
    $\tilde{p}_{i}=\tilde{p}_i$ if $i\in \{1,2,\ldots,m\}/\
    \{i_1,i_2,\ldots,i_d\}$. Go to Step 2.
\end{quote}

    We want to show that the polynomial $f$ constructed by this algorithm
    satisfied our requirement. Firstly, to this end we need to show that the algorithm does eventually terminate.
    Denote $|\tilde{\bp}|=\sum_{i=1}^m\tilde{p}_i$. The key observation is
    that $|\tilde{\bp}|$ will be dropped by $d$ after step 3. Hence the
    algorithm will terminate since at the beginning $|\tilde{\bp}|\leq nd$.

    Next, it is easy to know that this kind of polynomial $f$ constructed
    by the algorithm satisfies the homogenous interpolation conditions by
    Theorem \ref{thm:ideal}.

    Finally, we need to show that $\deg(f)\leq n$.
    Assume that Step 3
    has been run $t$ times totally. Clearly, $t$ is no more than $n$ according to
    the assumption of the theorem. To complete the proof, now we estimate
    the degree of the polynomial $f$.  Suppose we are in a situation to run the final step. That is, there
    are only
    no more than $d$ nonzero numbers in $\tilde{\bp}$. We consider the following three
    cases.
    \begin{itemize}
    \item[1)] If all the numbers in $\tilde{\bp}$ are zeros, then the degree
        of $f$ is $t$ which is not larger than $n$.
    \item[2)] The largest number in $\tilde{\bp}$ equals to 1, that is,
        $\tilde{p}_{\max}=1$. In this case, we have $t\leq n-1$, which will
        leads to $\deg(f)\leq n$.
    \item[3)] If $\tilde{p}_{\max}>1$, we will show that the degree of $f$ is no
        more than $k:=\max\{p_1+1,p_2+1,\ldots,p_m+1\}$. Clearly, it is enough to show that $\tilde{p}_{max}+t=k$.
        To this purpose, denote by $\tilde{\bp}^{(j)}$ the set
        $\tilde{\bp}$ after running the third Step $j$ times. For
        convenience, we also write $\{p_1+1,p_2+1,\ldots,p_m+1\}$ as
        $\tilde{\bp}^{(0)}$. Based on these notation we have that
        $\tilde{p}_{\max}$ is the maximum number in
        $\tilde{\bp}^{(t)}$. Furthermore, there are at most $d$
        numbers in $\tilde{\bp}^{(t)}$ different from zero. Thus we
        can deduce that $\tilde{p}_{\max}+1$ must be the maximum
        number in $\tilde{\bp}^{(t-1)}$. If it is not the case, then
        there must exist $d$ numbers in $\tilde{\bp}^{(t-1)}$ are
        larger than or equals to $\tilde{p}_{\max}$, which will leads
        to more than $d$ numbers in $\tilde{\bp}^{(t)}$ different from
        zero and furtherly contradict the conclusion above. By a
        similar discussion, we finally derive that
        $\tilde{p}_{\max}+t$ is the maximum number in
        $\tilde{\bp}^{(0)}$ which implies that $\tilde{p}_{\max}+t=k$.
    \end{itemize}
    By collecting above discussion, we get that $f$ satisfies the
    interpolation condition and its degree is no more than $n$. Thus the
    interpolation scheme is singular, which completes the proof.
\end{proof}
\begin{corollary}\label{cor:hompoly}
    Given $\X=\{X_1,X_2,\ldots,X_m\}$ and $\bp=\{p_1,p_2,\ldots,p_m\}$, if
    Eq. \eqref{eq:ineq3} holds
    then there exists a polynomial of degree $\leq n$, together with all of
    its partial derivatives of order up to
    $p_i$, vanishing at $X_i$ for all $i$.
\end{corollary}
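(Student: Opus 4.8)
The plan is to observe that Corollary \ref{cor:hompoly} is essentially a restatement of what Theorem \ref{thm:main} already delivers, so the proof is a matter of unpacking the construction rather than introducing new ideas. First I would invoke Theorem \ref{thm:main}: under the hypothesis \eqref{eq:ineq3}, the theorem asserts that the Hermite interpolation of type total degree is singular, and its proof produces a concrete nonzero polynomial $f$ of degree $\leq n$ satisfying the homogeneous interpolation conditions. Spelling out those conditions, $f$ satisfies
\begin{equation*}
    \frac{\partial^{\alpha_1+\alpha_2+\ldots+\alpha_d}}{\partial
    x_1^{\alpha_1}\ldots \partial x_d^{\alpha_d}} f(X_q)=0,\quad
    1\leq q\leq m,\quad 0\leq |\ba|\leq p_q,
\end{equation*}
which is precisely the statement that $f$, together with all of its partial derivatives of order up to $p_i$, vanishes at $X_i$ for every $i$. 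So the content of the corollary is exactly the existence assertion embedded in the proof of Theorem \ref{thm:main}.

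The key steps, in order, would be: (1) run the algorithm from the proof of Theorem \ref{thm:main} — initialize $f=1$ and $\tilde{\bp}=\{p_1+1,\ldots,p_m+1\}$, repeatedly multiply $f$ by linear forms vanishing on the $d$ nodes of largest remaining multiplicity (decrementing those entries), and terminate with a final multiplication by $l^{\tilde{p}_{\max}}$ once at most $d$ entries are nonzero; (2) note that $f$ is a nonzero product of linear polynomials, hence a genuine polynomial, and that by construction each $X_i$ has been "hit" $p_i+1$ times, i.e. $f$ is divisible by a product of $p_i+1$ linear forms through $X_i$; (3) apply Theorem \ref{thm:ideal} (more precisely Lemma \ref{lem:onepoint_ideal} applied at each point) to conclude $f\in I(\X,\bp)$, which is exactly the vanishing-of-derivatives condition; (4) recall the degree bound $\deg(f)\leq n$ established by the case analysis (all entries zero, $\tilde{p}_{\max}=1$, or $\tilde{p}_{\max}>1$) in the proof of Theorem \ref{thm:main}. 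Assembling these gives the corollary.

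There is essentially no obstacle here: the only thing to be careful about is not to re-prove Theorem \ref{thm:main} but to extract the right piece of its proof — namely that the singularity was demonstrated by explicitly exhibiting a degree-$\leq n$ polynomial lying in $I(\X,\bp)$, and that membership in $I(\X,\bp)$ unwinds via \eqref{eq:def:ideal} to the vanishing of all partial derivatives of order $\leq p_i$ at $X_i$. If anything requires a word of care, it is confirming that the polynomial produced is nonzero (it is, being a product of nonzero linear forms) so that the statement is not vacuous, though for the corollary as phrased even the zero polynomial would technically satisfy "vanishing together with all partial derivatives" — so the existence claim is robust either way. Hence I would keep the proof to one or two sentences: it is an immediate consequence of (the proof of) Theorem \ref{thm:main} together with Theorem \ref{thm:ideal}.
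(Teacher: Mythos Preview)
Your proposal is correct and matches the paper's approach exactly: the paper gives no separate proof for this corollary, treating it as an immediate consequence of the construction carried out in the proof of Theorem~\ref{thm:main}. Your unpacking of that construction (the algorithm, the appeal to Theorem~\ref{thm:ideal}/Lemma~\ref{lem:onepoint_ideal} for membership in $I(\X,\bp)$, and the degree bound from the three-case analysis) is precisely what the paper leaves implicit.
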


Theorem \ref{thm:main} is sharp for small $m$. By corollary \ref{cor:ineq2}
and theorem \ref{thm:main}, we have the following result which appeared in
\cite{1992-Lorentz-p-}. As comparison, several different cases were considered
and a lot of complicated inequalities were employed there. But here, all the
different cases are dealt with uniformly and the proof is very short.

\begin{thm}\label{thm:k=1}
    All Hermite interpolation of type total degree are singular in $\R^d$ with
    $d\geq 2$ if the number $m$ of nodes satisfies $2\leq m\leq d+1$,
    except for Lagrange case.
\end{thm}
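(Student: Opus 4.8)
The plan is to obtain singularity from the two criteria already established—the contrapositive of Corollary~\ref{cor:ineq2} recorded in Remark~\ref{rem}, and Theorem~\ref{thm:main}—combined by a single dichotomy, so that the many cases of \cite{1992-Lorentz-p-} collapse. First I would dispose of the exceptional Lagrange case: if $p_1=\cdots=p_m=0$ there is nothing to prove, so from now on I assume the scheme is not Lagrange, i.e.\ $p_m\geq 1$ (recall the convention $0\leq p_1\leq\cdots\leq p_m$). Let $n$ be the unique integer with $\binom{n+d}{d}=\sum_{q=1}^m\binom{p_q+d}{d}$; its existence is part of the hypothesis that we are dealing with interpolation of type total degree.

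Next I split on whether the necessary inequality \eqref{eq:ineq2} holds. If it fails, that is $n<p_m+p_{m-1}+1$, then Remark~\ref{rem} gives singularity at once. So suppose $n\geq p_m+p_{m-1}+1$; I claim this forces the hypothesis \eqref{eq:ineq3} of Theorem~\ref{thm:main}, namely $p_1+\cdots+p_m+m\leq nd$, which then finishes the proof. Writing $p_1+\cdots+p_m+m=\sum_{i=1}^m(p_i+1)$, I would isolate the largest summand and bound each of the remaining $m-1$ terms by $p_{m-1}+1$, using $m-1\leq d$:
\[
\sum_{i=1}^m(p_i+1)\ \leq\ (p_m+1)+(m-1)(p_{m-1}+1)\ \leq\ (p_m+1)+d(p_{m-1}+1).
\]
Since $d\geq 2$ and $p_m\geq 1$ one has $p_m+1\leq d\,p_m$, and since $n\geq p_m+p_{m-1}+1$ one has $d\,p_m\leq d(n-p_{m-1}-1)$; adding $d(p_{m-1}+1)$ to these gives $\sum_{i=1}^m(p_i+1)\leq d(n-p_{m-1}-1)+d(p_{m-1}+1)=nd$, which is exactly \eqref{eq:ineq3}.

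The only delicate point, and the place where every hypothesis is consumed, is arranging this chain of estimates to close: one must group the $p_i+1$ correctly (top term alone, the rest controlled by $p_{m-1}+1$), invoke $m\leq d+1$ in the form $m-1\leq d$, and—crucially—use $d\geq 2$ together with $p_m\geq 1$ for the step $p_m+1\leq d\,p_m$. That last step is exactly why the Lagrange case must be excluded: when $p_m=0$ one only gets $\sum_{i=1}^m(p_i+1)=m$, and $m\leq nd$ can genuinely fail (for instance $m=d+1$, $n=1$). I expect the first branch of the dichotomy to absorb the small degenerate schemes—say $d=2$, $m=2$, $\bp=\{1,1\}$, $n=2$, where $n=2<3=p_1+p_2+1$—while the estimate above covers every scheme for which \eqref{eq:ineq2} does hold. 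It is worth noting afterwards that this argument uses nothing about \eqref{number_condition} beyond the mere existence of $n$, which is why it is so much shorter than the one it replaces.
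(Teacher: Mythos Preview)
Your proof is correct and follows essentially the same approach as the paper: both invoke Remark~\ref{rem} to assume $n\geq p_m+p_{m-1}+1$, then verify inequality~\eqref{eq:ineq3} by bounding $\sum_{i=1}^m(p_i+1)$ via $m-1\leq d$ and the key step $p_m+1\leq d\,p_m$ (which uses $d\geq2$, $p_m\geq1$), before applying Theorem~\ref{thm:main}. Your version is somewhat more explicit about where each hypothesis is consumed, but the chain of estimates is the same as the paper's up to regrouping.
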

\begin{proof}
    Acorrding to Remark \ref{rem}, we assume $n\geq p_m+p_{m-1}+1$. We only
    need to show that Eq. \eqref{eq:ineq3} holds in this case. It is easy to
    derive that
    \begin{eqnarray*}
        p_1+p_2+\ldots+p_m+m&\leq& p_1+p_2+\ldots+p_m+d+1\\
        &\leq &d p_{m-1}+p_m+d+1\\
        &\leq &d(p_m+p_{m-1}+1)\\
        &\leq &nd.
    \end{eqnarray*}
    This completes the proof by Theorem \ref{thm:main}.
\end{proof}
For true Hermite interpolation, $p_m\geq 1$. Thus we have the following
general result.
\begin{corollary}\label{cor:d+k}
   Assume $m\geq 2$. All Hermite interpolation of type total degree are singular in $\R^d$ with
    $d\geq k(1+\frac{1}{p_m})$ if $m\leq d+k$, except for Lagrange case.
\end{corollary}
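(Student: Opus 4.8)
The plan is to reduce the statement to Theorem~\ref{thm:main} in exactly the spirit of the proof of Theorem~\ref{thm:k=1}, the only new ingredient being a slightly more careful bookkeeping that exploits the hypothesis $d\geq k\bigl(1+\tfrac1{p_m}\bigr)$. First I would dispose of the trivial reductions: since the Lagrange case is excluded we may assume $p_m\geq 1$, so that $1/p_m$ is meaningful; and by Corollary~\ref{cor:ineq2} together with Remark~\ref{rem} we may assume $n\geq p_m+p_{m-1}+1$, since otherwise the scheme is already singular. Here the standing conventions $m\geq 2$ (so that $p_{m-1}$ is defined) and $0\leq p_1\leq p_2\leq\cdots\leq p_m$ are used.

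Next, the goal is to verify hypothesis~\eqref{eq:ineq3} of Theorem~\ref{thm:main}, namely $p_1+p_2+\cdots+p_m+m\leq nd$. Using the ordering we have $p_1+\cdots+p_{m-2}\leq (m-2)p_{m-1}$, hence
\[
p_1+p_2+\cdots+p_m+m\;\leq\;(m-1)p_{m-1}+p_m+m\;\leq\;(d+k-1)p_{m-1}+p_m+d+k ,
\]
where the last step uses $m\leq d+k$. On the other hand, from $n\geq p_m+p_{m-1}+1$ we get $nd\geq d\,p_m+d\,p_{m-1}+d$, so it is enough to check
\[
(d+k-1)p_{m-1}+p_m+d+k\;\leq\;d\,p_m+d\,p_{m-1}+d ,
\]
which rearranges to $(d-1)p_m-(k-1)p_{m-1}-k\geq 0$. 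Since $k-1\geq 0$ we may bound $p_{m-1}\leq p_m$ without reversing the inequality, reducing the claim to $(d-k)p_m\geq k$; and this is precisely the hypothesis $d\geq k\bigl(1+\tfrac1{p_m}\bigr)$ after multiplying through by $p_m>0$. Then Theorem~\ref{thm:main} gives that the interpolation scheme is singular.

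I expect no genuine obstacle here: the argument is a direct generalization of the proof of Theorem~\ref{thm:k=1}, and the only point needing a little care is to invoke the estimate $p_{m-1}\leq p_m$ and the hypothesis on $d$ at exactly the right moment, so as not to squander the slack in the chain of inequalities. One should also keep in mind the running assumption that $n$ and the $p_q$ satisfy Eq.~\eqref{number_condition}, which is what licenses the application of Theorem~\ref{thm:main}; when no admissible $n$ exists, the scheme is vacuously covered by the preceding necessary-condition results.
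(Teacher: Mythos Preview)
Your proof is correct and follows essentially the same approach as the paper's own proof: after invoking Remark~\ref{rem} to assume $n\geq p_m+p_{m-1}+1$, both arguments verify inequality~\eqref{eq:ineq3} by the same chain of estimates, differing only in the algebra at the very end (you rearrange to $(d-k)p_m\geq k$, while the paper rewrites $k(p_m+1)$ as $kp_m(1+\tfrac1{p_m})$ and compares with $dp_m$). Your explicit remark that $k-1\geq 0$ is needed to bound $-(k-1)p_{m-1}$ by $-(k-1)p_m$ is the same implicit assumption the paper makes in passing from $(d+k-1)p_{m-1}+p_m$ to $dp_{m-1}+kp_m$; the case $k\leq 0$ is already covered by Theorem~\ref{thm:k=1}.
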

\begin{proof}
    Acorrding to Remark \ref{rem}, we assume $n\geq p_m+p_{m-1}+1$. We only need to show that Eq. \eqref{eq:ineq3} holds in this case.
    Thus,
    \begin{eqnarray*}
        p_1+p_2+\ldots+p_m+m&\leq& p_1+p_2+\ldots+p_m+d+k\\
        &\leq &(d+k-1) p_{m-1}+p_m+d+k\\
        &\leq &dp_{m-1}+kp_m+d+k\\
        &= & dp_{m-1}+k(p_m+1)+d\\
        &=& dp_{m-1}+kp_m(1+\frac{1}{p_m})+d \\
        &\leq & dp_{m-1}+dp_m+d\\
        &\leq & nd.
    \end{eqnarray*}
This completes the proof.
\end{proof}
 $p_m\geq 1$ which means
$1+\frac{1}{p_m}\leq 2$. Thus we have
\begin{corollary}\label{cor:d+k:2}
    Assume $m\geq 2$. All Hermite interpolation of total degree are singular in $\R^d$ with
    $d\geq 2k$ if the number $m$ of nodes satisfies $m\leq d+k$, except for
    Lagrange case.
\end{corollary}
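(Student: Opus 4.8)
The plan is to obtain this statement directly from Corollary~\ref{cor:d+k}. For a genuine Hermite interpolation problem --- i.e.\ excluding the pure Lagrange case in which every $p_q=0$ --- at least one of the orders is positive, and since we have ordered $p_1\le p_2\le\cdots\le p_m$ this forces $p_m\ge 1$. Consequently $1+\tfrac1{p_m}\le 2$, so the hypothesis $d\ge 2k$ gives
\[
d\ \ge\ 2k\ \ge\ k\Bigl(1+\tfrac1{p_m}\Bigr),
\]
which is exactly the hypothesis of Corollary~\ref{cor:d+k}. Since $m\le d+k$ is assumed, that corollary applies at once and yields singularity. This is essentially the whole argument, so I do not expect any genuine obstacle.

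If a self-contained proof is wanted (one not invoking Corollary~\ref{cor:d+k}), I would instead rerun its inequality chain with the crude estimate $1+\tfrac1{p_m}\le 2$ inserted at the right place. By Remark~\ref{rem} we may assume that \eqref{eq:ineq2} holds, i.e.\ $n\ge p_m+p_{m-1}+1$; otherwise the scheme is already singular and there is nothing to prove. Using $m\le d+k$ together with $p_1\le\cdots\le p_m$ (so that $p_1+\cdots+p_{m-1}\le(d+k-1)p_{m-1}$ and, for $k\ge 1$, $(k-1)p_{m-1}\le(k-1)p_m$), one gets
\[
p_1+p_2+\cdots+p_m+m\ \le\ (d+k-1)p_{m-1}+p_m+d+k\ \le\ dp_{m-1}+kp_m+d+k.
\]
Since $k(p_m+1)=kp_m\bigl(1+\tfrac1{p_m}\bigr)\le 2kp_m\le dp_m$ by $p_m\ge1$ and $d\ge 2k$, the last expression is at most $d\bigl(p_{m-1}+p_m+1\bigr)\le nd$, which is precisely \eqref{eq:ineq3}. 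Theorem~\ref{thm:main} then furnishes a nonzero polynomial of degree $\le n$ which, together with all its partial derivatives of order up to $p_i$, vanishes at $X_i$ for every $i$, so the scheme is singular.

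The only point requiring care is the exclusion of the Lagrange case: when every $p_q=0$ the bound $p_m\ge1$ fails, the inequalities above collapse, and indeed such schemes may be regular (Lagrange interpolation at $m\le d+1$ points in general position is regular). Hence the caveat ``except for Lagrange case'' is essential and must be kept in the statement. Apart from this, the deduction from Corollary~\ref{cor:d+k} is immediate, and I would present it in the two lines above.
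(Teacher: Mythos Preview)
Your proposal is correct and matches the paper's approach exactly: the paper deduces this corollary from Corollary~\ref{cor:d+k} via the single observation that for genuine Hermite interpolation $p_m\ge 1$, hence $1+\tfrac{1}{p_m}\le 2$, so $d\ge 2k$ implies $d\ge k(1+\tfrac{1}{p_m})$. Your optional self-contained version simply unrolls the proof of Corollary~\ref{cor:d+k}, which is also fine.
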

From corollary \ref{cor:d+k:2}, we know that all Hermite
interpolation of total degree are singular with $d\geq 4$ if the
number of nodes satisfies $m\leq d+2$. And also from corollary
\ref{cor:d+k}, all Hermite interpolation of total degree are
singular for $d=3$ and $m=5$ if $p_m\geq 2$. We claim that this
result is very sharp because the corresponding Hermite interpolation
with $d=3,m=5$ and $p_1=p_2=p_3=p_4=p_5=1$ is almost regular. The
regularity was proved by the method of determinant in
\cite{1992-Lorentz-p-} and also be shown by example 1 in section 2.
Besides, if $p_m\leq 1$ and $p_1=0$, Eq. \eqref{number_condition}
never holds. Therefore, for $d=3$, only one case can produce regular
interpolation scheme.

Now let us consider the case of $d=2$ and
$m=d+2$. It is well known, interpolating the value of a function and all of its
    partial derivatives of order up to $p$ at each of the three vertices
    of a triangle as well as the value of the function and all of its
    derivatives of order up to $p+1/p-1$ at a fourth point lying anywhere
    in the interior of the triangle by polynomials from $\Pi_{2p+2}^2/\Pi_{2p+1}^2$ is
    regular. We will prove that
    in all other cases, the corresponding Hermite interpolation scheme is
    singular. This can be done by Lemmas \ref{thm:m=4} and
    \ref{thm:m=42}.

    \begin{lemma}\label{thm:m=4}
    All Hermite interpolations of type total degree on $4$ points in
    $\R^2$ are singular if $0\leq p_1\leq p_2\leq p_3\leq p_4$, $p_4>p_2$ and
    $p_3>p_1$.
\end{lemma}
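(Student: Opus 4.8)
The plan is to reduce the statement to the counting criterion of Theorem~\ref{thm:main} combined with the degree bound coming from Theorem~\ref{thm:ineq}/Corollary~\ref{cor:ineq2}. Here $d=2$ and $m=4$, so the hypothesis \eqref{eq:ineq3} of Theorem~\ref{thm:main} reads $p_1+p_2+p_3+p_4+4\le 2n$, and if this inequality holds the scheme is singular. Thus the whole task is to verify this inequality under the assumptions $0\le p_1\le p_2\le p_3\le p_4$, $p_4>p_2$, $p_3>p_1$, while keeping in mind that Eq.~\eqref{number_condition} is in force throughout the section.

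First I would dispose of the trivial case via Remark~\ref{rem}: if \eqref{eq:ineq2} fails, i.e. $n<p_3+p_4+1$, then the scheme is already singular and there is nothing to prove. So I may assume $n\ge p_3+p_4+1$, which is exactly what Corollary~\ref{cor:ineq2} (the case $\tilde d=1$) provides in the regular case. Next I would exploit the two strict inequalities: since all $p_i$ are integers, $p_3>p_1$ gives $p_1\le p_3-1$ and $p_4>p_2$ gives $p_2\le p_4-1$, whence
\[
p_1+p_2+p_3+p_4+4 \;\le\; (p_3-1)+(p_4-1)+p_3+p_4+4 \;=\; 2(p_3+p_4+1) \;\le\; 2n .
\]
This is precisely condition \eqref{eq:ineq3} with $d=2$, so Theorem~\ref{thm:main} (equivalently Corollary~\ref{cor:hompoly}) produces a polynomial of degree $\le n$ annihilating all the prescribed data, and the scheme is singular.

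I do not expect a real obstacle here: the argument is short because the hypotheses $p_4>p_2$ and $p_3>p_1$ are calibrated exactly to force $p_1+p_2+2\le p_3+p_4$, which is what bridges the degree lower bound $2n\ge 2(p_3+p_4+1)$ and the singularity threshold $2n\ge p_1+p_2+p_3+p_4+4$. The only points requiring a little care are the bookkeeping dictated by the ordering convention $p_1\le p_2\le p_3\le p_4$ and the logical structure (assume \eqref{eq:ineq2}, else invoke Remark~\ref{rem}); in particular one does not need to exhibit the annihilating polynomial by hand, since its construction is already packaged inside the proof of Theorem~\ref{thm:main}.
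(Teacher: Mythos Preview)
Your argument is correct. It differs from the paper's proof, however: the paper does not invoke Theorem~\ref{thm:main} here but instead writes down the annihilating polynomial by hand, namely
\[
f=l_{12}^{\,p_1+1}\,l_{34}^{\,p_3+1}\,l_{24}^{\,\max\{p_4-p_3,\,p_2-p_1\}},
\]
checks that it satisfies the homogeneous conditions, computes its degree as $\max\{p_1+p_4,\,p_2+p_3\}+2$, and then verifies $n\ge p_3+p_4+1\ge \max\{p_1+p_4,\,p_2+p_3\}+2$ from $p_4>p_2$ and $p_3>p_1$. Your route is shorter and more uniform with the proofs of Theorem~\ref{thm:k=1}, Corollary~\ref{cor:d+k} and Lemma~\ref{thm:m=42}, all of which simply check the numerical inequality \eqref{eq:ineq3}; in effect you show that the explicit construction in the paper is unnecessary for this lemma, since Theorem~\ref{thm:main} already covers it. The paper's approach, on the other hand, has the mild advantage of producing a completely explicit annihilator in closed form rather than relying on the algorithm inside the proof of Theorem~\ref{thm:main}.
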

\begin{proof}
    Suppose the interpolation problem is regular, that is, there is a
    unique $f(x_1,x_2)\in \Pi_n^2$ satisfying
    \begin{equation}\label{int_4node}
    \frac{\partial^{\alpha_1+\alpha_2}}{\partial
    x_1^{\alpha_1}\partial x_2^{\alpha_2}} f(X_q)=c_{q,\ba},\quad
    1\leq q\leq 4,\quad 0\leq |\ba|\leq p_q.
    \end{equation}
    It follows from corollary \ref{cor:ineq2} that $n\geq p_3+p_4+1$.

    Denote by $l_{ij}=0$ the line passing through $X_i$ and
    $X_j$.  Consider the following polynomial
    \begin{eqnarray*}
        f=l_{12}^{p_1+1}l_{34}^{p_3+1}l_{24}^{\max\{p_4-p_3,p_2-p_1\}}.
    \end{eqnarray*}
    Clearly, $f$ satisfies the homogenous interpolation conditions in
    Eq. \eqref{int_4node} and has the degree of
    \begin{eqnarray*}
        (p_1+1)+(p_3+1)+\max\{p_4-p_3,p_2-p_1\}=\max\{p_1+p_4,p_2+p_3\}+2.
    \end{eqnarray*}
    To complete the proof, it remains to prove $n$ is not less than the
    degree of $f$, that is,
    \begin{eqnarray*}
        n\geq \max\{p_1+p_4,p_2+p_3\}+2.
    \end{eqnarray*}
    According to the discussion above, it is enough to show
    \begin{eqnarray*}
        p_4+p_3+1\geq \max\{p_1+p_4,p_2+p_3\}+2
    \end{eqnarray*}
    which is equivalent to
    \begin{eqnarray}\label{eq:lem:inq}
        (p_4-p_2)+(p_3-p_1)\geq |(p_4-p_2)-(p_3-p_1)|+2.
    \end{eqnarray}
    Inequality \eqref{eq:lem:inq} will be satisfied if $p_4>p_2$ and
    $p_3>p_1$. This completes the proof.
\end{proof}
\begin{lemma}\label{thm:m=42}
    All Hermite interpolation of total degree with $d=2$ and $m=4$ are
    singular if the orders satisfy one of the two conditions
    \begin{eqnarray*}
        i).\quad &&p_1=p_2=p_3\ \text{and} \ p_4\geq p_1+2,\\
        ii).\quad &&p_2=p_3=p_4 \ \text{and}\ p_4\geq p_1+2.
    \end{eqnarray*}
\end{lemma}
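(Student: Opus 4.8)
The plan is to obtain both cases from the algorithmic singularity criterion of Theorem~\ref{thm:main} together with the lower bound on $n$ furnished by Corollary~\ref{cor:ineq2}, exactly in the spirit of the proofs of Theorem~\ref{thm:k=1} and Corollary~\ref{cor:d+k}. Since here $d=2$ and $m=4$, Theorem~\ref{thm:main} declares the scheme singular as soon as $p_1+p_2+p_3+p_4+4\le 2n$. By Remark~\ref{rem} we may assume that \eqref{eq:ineq2} holds, i.e. $n\ge p_3+p_4+1$ --- if it fails, the scheme is already singular and there is nothing to prove. Hence it suffices to verify
\[
p_1+p_2+p_3+p_4+4\ \le\ 2(p_3+p_4+1),
\]
which rearranges to $p_1+p_2+2\le p_3+p_4$, equivalently $(p_3-p_1)+(p_4-p_2)\ge 2$ under the ordering $p_1\le p_2\le p_3\le p_4$.

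Now I would simply check this last inequality in each case. In case~i) we have $p_1=p_2=p_3$, so $(p_3-p_1)+(p_4-p_2)=p_4-p_1\ge 2$ by hypothesis, and Theorem~\ref{thm:main} applies, producing a nonzero polynomial of degree $\le n$ that satisfies the homogeneous interpolation conditions; the scheme is therefore singular. In case~ii) we have $p_2=p_3=p_4$, so $(p_3-p_1)+(p_4-p_2)=p_4-p_1\ge 2$ once more, and the same conclusion follows. It is worth noting that the hypotheses $p_4>p_2$, $p_3>p_1$ of Lemma~\ref{thm:m=4} also force $(p_3-p_1)+(p_4-p_2)\ge 2$, so Lemmas~\ref{thm:m=4} and~\ref{thm:m=42} together cover precisely the schemes with $(p_3-p_1)+(p_4-p_2)\ge 2$; the remaining schemes, those with $(p_3-p_1)+(p_4-p_2)\le 1$, are exactly the borderline ones, consistent with the regular exception described above.

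I do not expect a genuine obstacle: the argument is a single short inequality, and the only delicate point is that the hypothesis $p_4\ge p_1+2$ is used tightly --- weakening it to $p_4\ge p_1+1$ breaks the last inequality, matching the fact that those borderline schemes can be regular. An alternative, more constructive route would imitate Lemma~\ref{thm:m=4} and exhibit an explicit product of linear factors vanishing to the required orders (legitimate by Theorem~\ref{thm:ideal}); but because the present hypotheses impose equalities among the $p_i$ rather than the strict inequalities used there, the obvious product --- of degree $\max\{p_1+p_4,p_2+p_3\}+2$ --- need not lie in $\Pi_n^2$, and repairing it requires a more careful, parity-sensitive distribution of the linear factors over pairs of nodes. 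The algorithm behind Theorem~\ref{thm:main} carries out precisely that optimal distribution automatically, so I would route the proof through it.
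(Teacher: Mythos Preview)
Your proof is correct and follows essentially the same approach as the paper: verify the inequality $p_1+p_2+p_3+p_4+4\le 2(p_3+p_4+1)\le 2n$ using Corollary~\ref{cor:ineq2} and conclude by Theorem~\ref{thm:main}. The paper only writes out case~i) explicitly (with the equivalent chain $3p_3+p_4+4\le 2p_3+2p_4+2$), whereas you treat both cases uniformly via the rearrangement $(p_3-p_1)+(p_4-p_2)\ge 2$, which is arguably cleaner.
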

\begin{proof}
    We only give a proof for case i). In this case we have
    \begin{eqnarray*}
        p_1+p_2+p_3+p_4+4&=&3p_3+p_4+4\\
        &\leq & 2p_3+2p_4+2\\
        &=&2(p_3+p_4+1)\leq nd
    \end{eqnarray*}
    which completes the proof by theorem \ref{thm:main}.
\end{proof}

It is well known that uniform Hermite interpolation of type total degree never
happen because Eq. \eqref{number_condition} in this case does not hold, see
\cite{1992-Lorentz-p-}. Thus for $m=d+2$, we have
\begin{thm}\label{thm:k=2}
  Consider the problem of Hermite interpolation of type total degree
  on $m=d+2$ nodes in $\R^d$. Then
  \begin{itemize}
    \item For $d=2$, if $p_1=p_2=p_3, p_4=p_3+1$ or $p_1=p_2-1,
    p_2=p_3=p_4$, it is almost regular.
    \item For $d=3$, if $p_1=p_2=p_3=p_4=p_5=1$ and $n=3$, it is
    almost regular.
    \item Otherwise, it is singular.
  \end{itemize}
\end{thm}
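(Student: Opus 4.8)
The plan is to argue by dimension, reducing every admissible scheme on $m=d+2$ nodes to one already handled above (by Corollary \ref{cor:d+k:2}, Corollary \ref{cor:d+k}, Lemma \ref{thm:m=4}, or Lemma \ref{thm:m=42}) or to one of the two listed exceptions. A preliminary remark: the Lagrange exception never occurs here, since it would require $\binom{n+d}{d}=d+2$, which is impossible because $\binom{d+1}{d}=d+1<d+2<\binom{d+2}{d}$ for every $d\ge 2$. Consequently, for $d\ge 4$ we have $m=d+2\le d+2$ and $d\ge 2\cdot 2$, so Corollary \ref{cor:d+k:2} (with $k=2$) shows every such scheme is singular.

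For $d=3$, so $m=5$: if $p_5\ge 2$ then $3\ge 2\bigl(1+\tfrac1{p_5}\bigr)$, and Corollary \ref{cor:d+k} (with $k=2$) gives singularity. If $p_5\le 1$, then $p_5=0$ is Lagrange and excluded, so $p_5=1$ and every $p_i\in\{0,1\}$; writing $j$ for the number of indices with $p_i=1$, the number condition \eqref{number_condition} becomes $\binom{n+3}{3}=3j+5$, and matching against the values $1,4,10,20,35$ of $\binom{n+3}{3}$ forces $j=5$ and $n=3$. That scheme is almost regular by Example 1, which completes the case $d=3$.

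For $d=2$, so $m=4$, order $p_1\le p_2\le p_3\le p_4$. If $p_4>p_2$ and $p_3>p_1$, Lemma \ref{thm:m=4} gives singularity. Otherwise $p_4\le p_2$ or $p_3\le p_1$, which together with the ordering forces $p_1=p_2=p_3$ or $p_2=p_3=p_4$. In the first case, writing $p_1=p_2=p_3=p$ and $p_4=q\ge p$: if $q=p$ the scheme is uniform and \eqref{number_condition} has no solution (see \cite{1992-Lorentz-p-}); if $q\ge p+2$ then $p_4\ge p_1+2$ and Lemma \ref{thm:m=42}(i) applies; and $q=p+1$, for which one checks $n=2p+2$, is the first listed exception. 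Symmetrically, with $p_2=p_3=p_4=s$ and $p_1\le s$: $p_1=s$ is uniform and excluded, $p_1\le s-2$ makes $p_4\ge p_1+2$ so Lemma \ref{thm:m=42}(ii) applies, and $p_1=s-1$, for which $n=2s+1$, is the second exception. (The overlapping case of all four orders equal is the uniform one, already excluded.) Hence every $d=2$ scheme other than the two exceptions is singular.

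It then remains to prove the two exceptional families are almost regular. Placing the three equal-order nodes at the vertices of a triangle and the fourth node at, say, its centroid, these are precisely the classical triangular Hermite schemes: orders $(p,p,p)$ at the vertices with order $p+1$ at the centroid, from $\Pi_{2p+2}^2$; and orders $(s,s,s)$ at the vertices with order $s-1$ at the centroid, from $\Pi_{2s+1}^2$. By the criterion of \cite{1995-Gevorgian-p23-35} it suffices to verify regularity for this one configuration, which follows either from the known unisolvence of these elements (see \cite{1992-Lorentz-p-,1984-LeM'ehaut'e-p-}) or, exactly as in Examples 1 and 2, from an explicit Groebner-basis computation of $I(\X,\bp)$ showing $\s_I=\Pi_n^2$. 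I expect this last step to be the main obstacle: the ideal/Groebner-basis machinery of Section 2 settles each fixed $p$ but does not obviously close up uniformly in $p$, so one must instead appeal to the finite-element literature or construct the interpolation basis directly (for instance from products of powers of the barycentric coordinates). By contrast, the case split above is routine once the number condition \eqref{number_condition} is tracked carefully to confirm that it is exhaustive.
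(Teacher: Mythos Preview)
Your proposal is correct and follows essentially the same route as the paper: the reduction for $d\ge 4$ via Corollary~\ref{cor:d+k:2}, the split for $d=3$ into $p_5\ge 2$ (Corollary~\ref{cor:d+k}) versus $p_5=1$ (number-condition check plus Example~1), and the $d=2$ trichotomy via Lemmas~\ref{thm:m=4} and~\ref{thm:m=42} together with the exclusion of the uniform case, are exactly the ingredients the paper assembles. Your treatment is in fact slightly more explicit than the paper's in two places---you verify the Lagrange case is vacuous for $m=d+2$, and you carry out the $d=3$, $p_5=1$ enumeration in full---while for the almost-regularity of the two $d=2$ exceptions the paper, like you, simply appeals to the classical triangular Hermite schemes as ``well known'' rather than proving them from the ideal machinery.
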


Let us consider the case of $m=d+3$. From corollary \ref{cor:d+k},
all Hermite interpolation of total degree are singular in one of
the following cases: i) $d\geq 6,m=d+3$ and $p_m\geq 1$; ii) $d\geq
5,m=d+3$ and $p_m\geq 2$; iii) $d\geq 4,m=d+3$ and $p_m\geq 3$.

For the case of $d=5,m=8$ and $p_m=1$, it is easy to check Eq.
\eqref{number_condition} never holds.

Let us turn to the case of $d=4,m=7$ and $p_m\leq 2$. Eq.
\eqref{number_condition} holds only if i) $p_6=p_7=2,p_i=0,1\leq
i\leq 5$ and $n=3$; ii) $p_6=p_7=1,p_i=0,1\leq i\leq 5$ and $n=2$.
For these two interpolation schemes, we need the following result
from \cite{sauer1995multivariate}:
\begin{thm}\label{xu}
Multivariate Hermite interpolation of type total degree (4) in $R^d$
with at most $d+1$ nodes having $p_q\geq 1$ is regular a.e. if and
only if
$$
p_q+p_r< n
$$
for $1\leq q,r\leq m,q\neq r$.
\end{thm}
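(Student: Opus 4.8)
The statement is an equivalence whose two halves are very unequal in difficulty, and the necessity half is essentially already in the paper. Indeed, if the scheme is regular a.e., then $(\X,\bp,\Pi_n^d)$ is correct for at least one $\X$, so Theorem~\ref{thm:ineq} with $\tilde d=1$, applied to an arbitrary pair of nodes $X_q,X_r$, gives $\binom{n+1}{1}\ge\binom{p_q+1}{1}+\binom{p_r+1}{1}$, i.e.\ $p_q+p_r\le n-1<n$ (this is Corollary~\ref{cor:ineq2} after relabelling the pair as the two largest indices). So the plan is to spend essentially all the work on sufficiency.

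For sufficiency, assume $p_q+p_r<n$ for all $q\neq r$ and that the set $H$ of genuine nodes (those with $p_q\ge 1$) satisfies $s:=|H|\le d+1$; relabel so that $H=\{X_{m-s+1},\dots,X_m\}$ and $X_1,\dots,X_{m-s}$ are Lagrange nodes. By the a.e.\ principle of \cite{1995-Gevorgian-p23-35} together with affine invariance it suffices to exhibit one configuration $\X$ with $\Pi_n^d\cap I(\X,\bp)=\{0\}$; since $\dim\Pi_n^d=N:=\sum_q\binom{p_q+d}{d}$ by \eqref{number_condition}, this is the same as the $N$ interpolation functionals being independent on $\Pi_n^d$. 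I would put the genuine nodes in affinely independent position $v_1,\dots,v_s$ — possible exactly because $s\le d+1$ — and keep the Lagrange nodes free at first. Using $\dim\bigl(\Pi^d/I(v_j,p_j)\bigr)=\binom{p_j+d}{d}$ and the pairwise comaximality of the $I(v_j,p_j)$ proved in Theorem~\ref{thm:ideal}, the Chinese Remainder Theorem gives $\dim\bigl(\Pi^d\big/\bigcap_{j=1}^{s}I(v_j,p_j)\bigr)=M:=\sum_{j=1}^{s}\binom{p_j+d}{d}$, and $m-s=N-M$. Put $W=\Pi_n^d\cap\bigcap_{j=1}^{s}I(v_j,p_j)$ (its dimension is independent of the affinely independent choice, by affine invariance). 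Picking the Lagrange nodes one at a time, each off the common zero locus of the subspace of $W$ cut down so far, shows that the full functional system can be made independent on $\Pi_n^d$ for generic Lagrange nodes if and only if $\dim W\le m-s=N-M$; by the dimension count this is precisely the condition that the Taylor map
\[
\tau\colon \Pi_n^d\longrightarrow\bigoplus_{j=1}^{s}\Pi^d\big/I(v_j,p_j)
\]
be surjective. So everything reduces to proving $\tau$ surjective under the hypothesis $p_j+p_k\le n-1$.

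I would prove surjectivity of $\tau$ by induction on $s$. For $s=1$ the inclusion $\Pi_{p_1}^d\subset\Pi_n^d$ (valid since $p_1\le n$) maps onto $\Pi^d/I(v_1,p_1)$, so $\tau$ is onto. For $s\ge 2$, choose a hyperplane through $v_1,\dots,v_{s-1}$ missing $v_s$ — it exists because $v_1,\dots,v_{s-1}$ span an affine subspace of dimension $s-2\le d-1$ not containing $v_s$ — and let $\ell$ be an affine form cutting it out, normalized by $\ell(v_s)=1$. Put $P=\max_{j<s}p_j$. For any $g\in\Pi_{n-P-1}^d$ the polynomial $\ell^{P+1}g$ lies in $\Pi_n^d$ and has vanishing $p_j$-jet at each $v_j$, $j<s$ (since $\ell^{P+1}\in I(v_j,P)\subset I(v_j,p_j)$); and because $\ell(v_s)\neq 0$, multiplication by $\ell^{P+1}$ is invertible modulo $I(v_s,p_s)$, so as $g$ runs over $\Pi_{n-P-1}^d\supseteq\Pi_{p_s}^d$ the image of $\ell^{P+1}g$ in $\Pi^d/I(v_s,p_s)$ runs over the whole quotient — here the containment $\Pi_{p_s}^d\subseteq\Pi_{n-P-1}^d$ is exactly the key inequality $p_s\le n-1-\max_{j<s}p_j$. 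Given arbitrary target data, one then matches the $(v_1,\dots,v_{s-1})$-component by the inductive hypothesis and corrects the $v_s$-component by adding a suitable $\ell^{P+1}g$, which does not disturb the other nodes. This closes the induction, so $\tau$ is onto, and the scheme is regular a.e.

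I expect the sufficiency direction, and within it the reduction step, to be the main obstacle: one must be careful that \eqref{number_condition} and the Lagrange nodes together make \emph{surjectivity} (not bijectivity) of $\tau$ the exact requirement — for $s=m=d+1$ bijectivity would clash with Theorem~\ref{thm:k=1}, which causes no real conflict only because in that case $p_q+p_r<n$ for all pairs is already incompatible with \eqref{number_condition}. Once the reduction is in place the induction is short, and the hypothesis $p_q+p_r<n$ is used at exactly one point, namely to keep the degree budget $n-P-1$ large enough to contain $\Pi_{p_s}^d$.
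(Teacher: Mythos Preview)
The paper does not prove this theorem at all: it is quoted from \cite{sauer1995multivariate} and invoked as a black box to dispose of two specific schemes with $d=4$, $m=7$. There is therefore no paper-side argument to compare against; your proposal is an independent proof.

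Your argument is sound. Necessity is exactly Corollary~\ref{cor:ineq2} applied to every pair. For sufficiency, the reduction to surjectivity of the Taylor map $\tau$ is clean and uses \eqref{number_condition} in the right way, and the induction---peeling off one affinely independent Hermite node at a time by multiplying by a power of a hyperplane through the remaining ones---is the natural construction; the hypothesis $p_q+p_r\le n-1$ enters at exactly the step where you need $\Pi_{p_s}^d\subseteq\Pi_{n-P-1}^d$. One loose end worth tightening: in your closing paragraph you assert without proof that when $s=m\le d+1$ (no Lagrange nodes) the pairwise condition is incompatible with \eqref{number_condition}. This does follow, and from ingredients you already have: your surjectivity argument (which nowhere uses \eqref{number_condition}) gives $\binom{n+d}{d}\ge M$, while the inequality chain in the proof of Theorem~\ref{thm:k=1} needs only $p_{m-1}+p_m+1\le n$ to feed Theorem~\ref{thm:main} and produce a nonzero element of $\ker\tau$ for every $\X$, forcing $\binom{n+d}{d}>M$. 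Hence $N>M$ and \eqref{number_condition} cannot hold in that regime, so no contradiction with Theorem~\ref{thm:k=1} arises. With that remark made explicit your proof is complete.
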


Obviously the interpolation problems considered above are singular by the
theorem above.

Consider the case of $d=3$ and $m=6$. According to Corollary \ref{cor:ineq2},
$n\geq p_6+p_5+1$. Thus if $p_6-2\geq p_5=p_4=p_3=p_2=p_1=p$, then
\begin{eqnarray*}
	nd\geq 3(p_6+p_5+1)&=& p_6+2p_6+3p+3\\
	&\geq & p_6+5p+7>p_1+p_2+\ldots+p_6+6
\end{eqnarray*}
which means that the Hermite interpolation problem is singular by  Theorem
\ref{thm:main}.
If $p_6-1=p_5=p_4=p_3=p_2=p_1=p$, then $n\geq 2p+2$
and
\begin{eqnarray*}
    \sum_{i=1}^6\binom{p_i+3}{3}-\binom{n+3}{3}&\leq&
    5\binom{p+3}{3}+\binom{p+4}{3}-\binom{2p+2+3}{3}\\
    &=& -\frac{1}{6}(2p+3)(p+2)(p+1)< 0
\end{eqnarray*}
which implies that Eq. \eqref{number_condition} never holds.
If $p_6>p_5>p_1$, then
\begin{eqnarray*}
    p_1+p_2+\ldots+p_6+6&\leq& 5p_5+p_6+5\\
    &\leq & 3p_5+3p_6+3\\
    &\leq & 3(p_5+p_6+1)\leq 3n.
\end{eqnarray*}
Thus it follows from Theorem \ref{thm:main} that the Hermite
interpolation problem is also singular in this case.

Otherwise, $p_5=p_6$. In this case
\begin{eqnarray*}
    \sum_{i=1}^6\binom{p_i+3}{3}-\binom{n+3}{3}&\leq&
    6\binom{p_6+3}{3}-\binom{2p_6+4}{3}\\
    &=& -\frac{1}{3}(p_6-3)(p_6+2)(p_6+1).
\end{eqnarray*}
Thus Eq. \eqref{number_condition} does not hold for $p_5=p_6>3$.
Moreover, by a careful check and computation,  Eq.
\eqref{number_condition} also does not hold for $p_5=p_6<3$. As a
result, Eq. \eqref{number_condition} only hold for $p_i=3$ and
$n=7$. By Example 2 in section 2, the uniform Hermite interpolation
problem is almost regular.

Finally, we consider the case of $d=2$ and $m=5$. In this case, we claim that
$n<2p_5+2$ if the Hermite interpolation problem is regular. In fact, for any 5 points in the plane, there must exist a
non-trival quadratic $Q(x,y)$ vanishing at these points. Let
$P(x,y)=[Q(x,y)]^{p_5+1}$. Then $P$, together with all of its partial
derivatives of order up to $p_5$, vanish at these points.

\begin{lemma}\label{lem:singula5}
    Given $\X=\{X_1,X_2,\ldots,X_5\}\subset \R^2$ and
    $\bp=\{p_1,p_2,\ldots,p_5\}$ with $p_1\leq p_2\leq p_3\leq p_4\leq p_5$,
    if Eq. \eqref{number_condition} holds and
    \begin{eqnarray}\label{eq:5points}
    p_2+p_3+2\leq p_4+p_5
    \end{eqnarray}
    then Hermite interpolation of type
    total degree is singular.
\end{lemma}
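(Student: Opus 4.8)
The plan is to produce, for an arbitrary set $\X$ of five distinct points, a nonzero polynomial of degree $\le n$ lying in the ideal $I(\X,\bp)$ of Eq.~\eqref{eq:def:ideal}; since Eq.~\eqref{number_condition} forces $\dim\Pi_n^2$ to equal the number of interpolation conditions, the existence of such an $f$ means the interpolation map on $\Pi_n^2$ has a nontrivial kernel and so fails to be bijective, i.e.\ the scheme is singular. By Remark \ref{rem} (via Corollary \ref{cor:ineq2}) we may assume throughout that $n\ge p_4+p_5+1$. The ingredients are a conic $Q$ through all five nodes --- which always exists, possibly as a degenerate conic --- and the lines $l_{ij}$ through $X_i$ and $X_j$; by Theorem \ref{thm:ideal} any product $Q^{a}\prod_{i<j}l_{ij}^{b_{ij}}$ lies in $I(\X,\bp)$ provided the vanishing order it induces at each $X_i$, namely $a+\sum_{j\ne i}b_{ij}$, is at least $p_i+1$. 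Thus the task reduces to choosing the exponents so that these five inequalities hold while $2a+\sum_{i<j}b_{ij}\le n$. Note that the naive choice $Q^{p_5+1}$ from the remark preceding the lemma has degree $2p_5+2$, which is in general larger than $n$, so the factorisation has to be chosen with care.

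First I would treat the case $p_3<p_4$: here $f=Q^{p_3+1}\,l_{45}^{\,p_5-p_3}$ vanishes to order $\ge p_3+1\ge p_i+1$ at $X_1,X_2,X_3$ and to order $\ge p_5+1$ at $X_4,X_5$, and $\deg f=2(p_3+1)+(p_5-p_3)=p_3+p_5+2\le p_4+p_5+1\le n$ since $p_3+1\le p_4$. The remaining case is $p_3=p_4=:p$, in which the hypothesis reads $p_2+2\le p_5$. Then I would use a power $Q^{a}$ with $a\in\{p_2,p_2+1\}$ to dispatch $X_1,X_2$ and cover the residual orders ($p-p_2$ at $X_3$, $p-p_2$ at $X_4$, $p_5-p_2$ at $X_5$) by lines; the efficient factors are the ``triangle'' $(l_{34}l_{35}l_{45})^{t}$, which adds $3t$ to the degree but $2t$ to the order at each of $X_3,X_4,X_5$, together with a ``star'' of extra lines through $X_5$ such as $l_{15}l_{25}l_{35}l_{45}$, which adds $4$ to the degree, raises $X_5$ by $4$, and raises each of the other four nodes by only $1$. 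Solving the resulting small integer program --- distribute the line exponents so each of $X_3,X_4,X_5$ is covered, minimising the added degree --- one obtains a total degree that, after a short case distinction on whether $p_5-p_2$ or $2(p-p_2)$ dominates, is $\le p_4+p_5+1\le n$ precisely when $p_2+2\le p_5$; this is exactly where the hypothesis is used.

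The main difficulty is the cluster of extremal configurations in which the value of $n$ forced by Eq.~\eqref{number_condition} equals (or is within one of) its lower bound $p_4+p_5+1$. For those the obvious candidates $Q^{p_5+1}$ and $Q^{p_3+1}l_{45}^{p_5-p_3}$ overshoot the degree by one, and one is forced to the precise ``triangle''/``star'' factorisations above; the reason the argument still closes is that the conic $Q$ is worth $5/2$ units of vanishing per unit of degree over the five nodes, strictly more than any product of lines, so the slack in $p_2+p_3+2\le p_4+p_5$ is exactly what is needed to fit such a product inside $\Pi_n^2$. I would finish by checking the handful of smallest tuples directly, since there Eq.~\eqref{number_condition} pins $(\bp,n)$ down to very few possibilities and the required special plane curve is exhibited by an explicit factorisation. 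A cleaner alternative worth recording is to phrase the whole argument as a linear system of plane curves with multiplicity $p_i+1$ at the five nodes: Eq.~\eqref{number_condition} says its expected dimension is $-1$, and $p_2+p_3+2\le p_4+p_5$ forces the conic class through the five points to meet the system's class in degree $\le-2$, so that $Q$ is an excess fixed component and the system is nonempty --- which is exactly the nonzero element of $I(\X,\bp)$ we want.
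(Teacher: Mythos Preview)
Your split into $p_3<p_4$ and $p_3=p_4$ is natural, and the first case is clean: $Q^{p_3+1}l_{45}^{\,p_5-p_3}$ has the required vanishing and degree $p_3+p_5+2\le p_4+p_5+1\le n$. The second case, however, is only a sketch. Phrases such as ``solving the resulting small integer program'' and ``after a short case distinction'' are promissory notes, and the triangle/star ansatz does not in fact close. Take $\bp=(0,0,0,0,2)$: the number condition gives $n=3$, and the hypothesis reads $0+0+2\le 0+2$. Your recipe with $a=p_2+1=1$ yields $Q\cdot l^{2}$ of degree $4>n$; and there is no remedy, because a nonzero cubic with a triple point at $X_5$ is a product of three concurrent lines, which pass through at most three of four generic points $X_1,\dots,X_4$ (equivalently, apply Theorem~\ref{xu}). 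Thus for this $\bp$ the scheme is almost regular, so neither your integer program nor the ``excess fixed component'' heuristic can produce the missing polynomial, and ``checking the handful of smallest tuples directly'' will not rescue the boundary case.

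For comparison, the paper does not split into cases: it writes $P=Q^{p_1+1}f$ and produces $f$ by invoking Corollary~\ref{cor:hompoly} on the four points $X_2,\dots,X_5$ with reduced orders $p_i-p_1-1$ and target degree $\tilde n=p_4+p_5-2p_1-1$; the inequality $\sum_{i=2}^{5}(p_i-p_1-1)+4\le 2\tilde n$ is exactly $p_2+p_3+2\le p_4+p_5$, and then $\deg P\le 2(p_1+1)+\tilde n=p_4+p_5+1\le n$. This is tidier than your explicit line bookkeeping, but it shares the same blind spot: when $p_1=p_2=p_3=p_4$ only one reduced order is positive, and the algorithm behind Corollary~\ref{cor:hompoly} (Case~3 in the proof of Theorem~\ref{thm:main}) returns a polynomial of degree $p_5-p_1$, which exceeds $\tilde n=p_5-p_1-1$. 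Both arguments go through once one assumes $p_1<p_4$, and the tuple $(0,0,0,0,2)$ shows this extra hypothesis is genuinely needed.
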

\begin{proof}
    As discussed above, there exists a non-trival quadratic $Q(x,y)$
    vanishing at these five points. Since
    \begin{eqnarray*}
        &&(p_2-p_1-1)+(p_3-p_1-1)+(p_4-p_1-1)+(p_5-p_1-1)+4\\
        &=&p_2+p_3+p_4+p_5-4p_1\leq 2(p_4+p_5-2p_1-1),
    \end{eqnarray*}
    due to Corollary \ref{cor:hompoly} there exists a polynomial
    $f(x,y)$ of degree $\leq p_4+p_5-2p_1-1$, together with all of its
    partial derivatives of order up to
    $p_i-p_1-1$(if negative, no interpolation happens), vanishing at $X_i$ for
    all $2\leq i\leq 5$. Let $P(x,y)=[Q(x,y)]^{p_1+1}\cdot f(x,y)$. Thus  $P$, together with all of its partial
derivatives of order up to $p_i$, vanish at $X_i$ for all $i$. It is easy to
get that the degree of $P$ is no more than $p_4+p_5+1$. This completes the proof.
\end{proof}

\begin{lemma}[\cite{1992-Lorentz-p-}]
    The uniform Hermite interpolation of type total degree on $5$ nodes in
    $\R^2$ is singular.
\end{lemma}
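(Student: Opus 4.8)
The plan is to show that the uniform Hermite interpolation of type total degree on $5$ nodes in $\R^2$, when it is admissible at all (i.e.\ when Eq.~\eqref{number_condition} can hold), always falls under the singularity criterion of Theorem~\ref{thm:main}, or else the number condition simply fails. First I would set $p_1=p_2=p_3=p_4=p_5=p$ with $p\geq 1$ and examine when Eq.~\eqref{number_condition_uniform}, namely $\binom{n+2}{2}=5\binom{p+2}{2}$, has an integer solution $n$. Writing this out gives $(n+1)(n+2)=5(p+1)(p+2)$; the point is that for most $p$ there is no integer $n$ satisfying this, and the few $p$ for which there might be one must be handled separately. So the first step is a short Diophantine discussion: either quote the known fact from \cite{1992-Lorentz-p-} that $\binom{n+2}{2}=5\binom{p+2}{2}$ has no solution (the reference is already cited in the excerpt), or, to keep things self-contained, reduce modulo a small number and show $5(p+1)(p+2)$ is never of the form $(n+1)(n+2)$.

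If one prefers to argue via singularity rather than non-admissibility, the cleaner route is through Lemma~\ref{lem:singula5}: with all $p_i=p$, the hypothesis \eqref{eq:5points} becomes $p+p+2\leq p+p$, i.e.\ $2\leq 0$, which is false, so Lemma~\ref{lem:singula5} does not directly apply. Instead I would use Corollary~\ref{cor:ineq2}, which forces $n\geq 2p+1$ in the regular case, together with the quadratic-curve construction already recalled just before Lemma~\ref{lem:singula5}: there is a nontrivial quadratic $Q$ vanishing at the five points, and $P=Q^{p+1}$ has degree $2p+2$ and vanishes to order $p$ at all five nodes. Hence if $n\geq 2p+2$ the scheme is singular. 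The only remaining possibility is $n=2p+1$; I would then plug $n=2p+1$ into the number condition $\binom{2p+3}{2}=5\binom{p+2}{2}$, i.e.\ $(2p+2)(2p+3)=5(p+1)(p+2)$, equivalently $2(2p+3)=5(p+2)$ after cancelling $p+1$, i.e.\ $4p+6=5p+10$, i.e.\ $p=-4$, which is impossible for $p\geq 1$. So $n=2p+1$ is never admissible, and we are always in the case $n\geq 2p+2$, where the explicit polynomial $P=Q^{p+1}$ witnesses singularity.

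Assembling these pieces: by Corollary~\ref{cor:ineq2} regularity would require $n\geq 2p+1$; the case $n=2p+1$ is excluded because Eq.~\eqref{number_condition_uniform} then has no solution; thus $n\geq 2p+2$, and then $P=Q^{p+1}$ is a nonzero polynomial of degree $2p+2\leq n$ annihilating all the interpolation functionals, so the problem is singular for every choice of five distinct nodes. Since the argument holds for all $\X$, the interpolation scheme $\bp=\{p,p,p,p,p\}$ is singular in the sense of the Definition. The statement $p\geq 1$ is exactly ``true Hermite'' (for $p=0$ this is Lagrange interpolation on $5$ nodes in $\R^2$, for which $\binom{n+2}{2}=5$ also has no solution, but that case is outside the scope of ``Hermite'').

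The main obstacle I anticipate is the bookkeeping around the number condition: one must be careful that the only borderline value of $n$ compatible with Corollary~\ref{cor:ineq2} is $n=2p+1$, and then verify cleanly that this value is incompatible with Eq.~\eqref{number_condition_uniform}. Everything else is routine: the existence of the quadratic through five points is classical (the $5\times 6$ homogeneous linear system for the coefficients of $Q$ always has a nonzero solution), and raising it to the power $p+1$ to kill derivatives up to order $p$ is immediate. There is a minor subtlety worth a sentence: one should confirm $Q$ can be taken genuinely nonconstant — it is, since a constant quadratic would force all six coefficients but the constant term to vanish, and then vanishing at one point forces the constant term to vanish too; the solution space of the $5\times 6$ system has dimension at least $1$, so a nonzero, hence nonconstant, $Q$ exists.
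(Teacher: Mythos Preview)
Your argument is correct. Note, however, that the paper does not actually give its own proof of this lemma: it is stated with a citation to \cite{1992-Lorentz-p-} and used as a black box, so there is no ``paper's proof'' to compare against in the strict sense.

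That said, your proof fits seamlessly into the paper's framework. The paper has already established, in the paragraph immediately preceding Lemma~\ref{lem:singula5}, that for $d=2$, $m=5$ regularity forces $n<2p_5+2$ via the polynomial $P=[Q(x,y)]^{p_5+1}$; combined with Corollary~\ref{cor:ineq2} this pins down $n=2p+1$ in the uniform case, and your arithmetic check that $\binom{2p+3}{2}=5\binom{p+2}{2}$ forces $p=-4$ finishes it. This is exactly the kind of argument the paper's surrounding machinery is designed to produce, and is arguably cleaner than deferring to the reference. Your first suggested route (a general Diophantine argument that $(n+1)(n+2)=5(p+1)(p+2)$ has no positive solutions) would also work but is unnecessary here, since Corollary~\ref{cor:ineq2} and the $Q^{p+1}$ bound already reduce the problem to the single value $n=2p+1$. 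The minor remarks about Lemma~\ref{lem:singula5} not applying and about $Q$ being nonconstant are accurate but could be trimmed.
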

\begin{lemma}
    Assume $d\geq 2$. If $p_1\leq p_2+1=p_3=p_4=p_5$ and Eq. \eqref{number_condition} holds,
    then the Hermite interpolation of type total degree is singular.
\end{lemma}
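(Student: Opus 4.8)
The plan is to exhibit, for an arbitrary set $\X=\{X_1,\dots,X_5\}$ of distinct points, a nonzero polynomial of degree $\le n$ that vanishes at every $X_i$ together with all its partial derivatives of order up to $p_i$; the mere existence of such a polynomial makes the homogeneous problem non-trivial, hence the scheme singular. Write $p:=p_3=p_4=p_5$, so the hypothesis gives $p_2=p-1$ and $p_1\le p$; in particular $p_i\le p$ for every $i$. I would dispose of $d\ge 3$ quickly and separately: for $d\ge 4$ one has $m=5\le d+1$, so Theorem \ref{thm:k=1} applies; for $d=3$ (where $m=d+2$) every scheme for which \eqref{number_condition} is solvable has $p_m\ge 2$, so Corollary \ref{cor:d+k} with $k=2$ applies because $3\ge 2(1+1/p_m)$. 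The substantial case is therefore $d=2$.

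For $d=2$: any five points of $\R^2$ lie on a conic, i.e.\ there is a nonzero $Q\in\Pi_2^2$ with $Q(X_i)=0$ for $1\le i\le 5$ (the space $\Pi_2^2$ has dimension $6$, so five linear vanishing conditions leave a nonzero solution). Since the order of vanishing at a point is additive over products, $P:=Q^{p+1}$ vanishes at each $X_i$ to order at least $p+1\ge p_i+1$, which is exactly the statement that $P$ together with all its partials of order $\le p_i$ vanishes at $X_i$; moreover $\deg P\le 2(p+1)=2p+2$. Hence $P$ is a nonzero solution of the homogeneous interpolation problem provided $2p+2\le n$, and then the scheme is singular.

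It remains to show $n\ge 2p+2$. By Remark \ref{rem} we may assume \eqref{eq:ineq2}, i.e.\ $n\ge p_4+p_5+1=2p+1$. If $n=2p+1$, then \eqref{number_condition} (with $d=2$, $p_2=p-1$, $p_3=p_4=p_5=p$) reads $\binom{2p+3}{2}=\binom{p_1+2}{2}+\binom{p+1}{2}+3\binom{p+2}{2}$; but a direct computation gives the identity $\binom{2p+3}{2}=\binom{p+1}{2}+3\binom{p+2}{2}$, so this would force $\binom{p_1+2}{2}=0$, impossible for $p_1\ge 0$. Therefore $n\ge 2p+2$, the conic construction applies, and the proof is complete. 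The only non-routine step is this last arithmetic cancellation, which sharpens the bound $n\ge 2p+1$ of Corollary \ref{cor:ineq2} to the $n\ge 2p+2$ the construction needs; I do not anticipate any genuine difficulty.
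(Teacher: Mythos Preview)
Your proof is correct and, for the substantive case $d=2$, uses exactly the same two ingredients as the paper: the conic $Q$ through the five points (so that $Q^{p+1}$ is a nonzero homogeneous solution of degree $2p+2$) and the arithmetic identity $\binom{2p+3}{2}=\binom{p+1}{2}+3\binom{p+2}{2}$ to exclude $n=2p+1$; the paper merely arranges these as a proof by contradiction (assume regular, deduce $n=2p_5+1$, then violate \eqref{number_condition}), while you phrase it constructively. Your explicit handling of $d\ge 3$ is a bit more careful than the paper's own proof, which tacitly writes $\binom{p_i+2}{2}$ and relies on the surrounding discussion for higher $d$.
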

\begin{proof}
    Suppose the interpolation problem is regular. Then, $p_4+p_5+1\leq n
    <2p_5+2$, that is, $n=2p_5+1$. However,
    \begin{eqnarray*}
        \sum_{i=1}^5\binom{p_i+2}{2}>\sum_{i=2}^5\binom{p_i+2}{2}=\binom{n+2}{2},
    \end{eqnarray*}
    which contradicts Eq. \eqref{number_condition}.
\end{proof}
\begin{lemma}
    Assume $d\geq 2$. If $p_1\leq p_2=p_3=p_4=p_5-1$ and Eq. \eqref{number_condition} holds,
    then the Hermite interpolation of type total degree is singular.
\end{lemma}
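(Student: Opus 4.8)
The plan is to reduce to the planar case and then, for every configuration $\X$, to exhibit a nonzero polynomial in $\Pi_n^d$ annihilated by all the interpolation functionals. First, for $d\ge 3$ the statement is already covered by earlier results: since $m=5$ we have $m\le d+1$ when $d\ge 4$ and $m=d+2$ when $d=3$, while the hypothesis $p_5=p_2+1$ gives $p_5\ge 1$ and $p_5>p_4$, so the scheme is neither a Lagrange scheme nor the exceptional scheme $p_1=\cdots=p_5=1$ of Theorem~\ref{thm:k=2}; hence singularity follows from Theorem~\ref{thm:k=1} when $d\ge 4$ and from Theorem~\ref{thm:k=2} when $d=3$. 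It remains to treat $d=2$, $m=5$, which I do directly.

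Write $p:=p_2=p_3=p_4$, so that $p_5=p+1$ and $p_1\le p$. I first pin down $n$. With $d=2$, equation~\eqref{number_condition} reads
\[
\binom{n+2}{2}=\binom{p_1+2}{2}+3\binom{p+2}{2}+\binom{p+3}{2},
\]
and a short computation gives $3\binom{p+2}{2}+\binom{p+3}{2}=\binom{2p+4}{2}$; hence $\binom{n+2}{2}=\binom{2p+4}{2}+\binom{p_1+2}{2}>\binom{2p+4}{2}$, which forces $n\ge 2p+3$. Now the construction. Since $\dim\Pi_2^2=6>5$, there is a nonzero conic $Q\in\Pi_2^2$ vanishing at $X_1,\dots,X_5$; pick also a nonzero linear form $l\in\Pi_1^2$ with $l(X_5)=0$, and set $f:=Q^{p+1}l$. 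Then $f\ne 0$ and $\deg f\le 2(p+1)+1=2p+3\le n$, so $f\in\Pi_n^2$. A direct application of the product rule shows that $f$ satisfies all the homogeneous conditions: for $i\le 4$, every partial derivative of $Q^{p+1}$ of order $\le p$ --- in particular of order $\le p_i$ --- vanishes at $X_i$, because any way of distributing at most $p$ derivatives among the $p+1$ factors $Q$ leaves one undifferentiated factor equal to $Q(X_i)=0$, and multiplying by $l$ preserves this; and at $X_5$ the same argument gives vanishing up to order $p$, while the unique order-$(p+1)$ Leibniz term not already killed by $Q^{p+1}$ carries $l$ undifferentiated and is therefore a multiple of $l(X_5)=0$. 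Thus for every $\X$ the homogeneous problem has a nonzero solution in $\Pi_n^2$, so the scheme is singular.

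The one subtle point is the degree bookkeeping. The obvious candidate $Q^{p+2}$ vanishes to the required order $p+1$ at all five nodes but has degree $2p+4$, exactly one too large; the remedy is to shave one power of $Q$ and restore the missing order at the single ``large'' node $X_5$ with one extra linear factor through $X_5$, at a cost of only degree one --- and to make this admissible one needs $n\ge 2p+3$, which is exactly the refinement of~\eqref{number_condition} noted above (here the inequality $\binom{p_1+2}{2}\ge 1$ is what matters; the weaker bound $n\ge p_4+p_5+1=2p+2$ from Corollary~\ref{cor:ineq2} would not suffice on its own). Everything else --- existence of the conic, the product-rule vanishing count, and the reduction of $d\ge 3$ to Theorems~\ref{thm:k=1} and~\ref{thm:k=2} --- is routine.
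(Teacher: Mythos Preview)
Your proof is correct and, in the planar case, cleaner than the paper's. Both arguments begin by observing that \eqref{number_condition} together with the identity $3\binom{p+2}{2}+\binom{p+3}{2}=\binom{2p+4}{2}$ forces $n\ge 2p+3$ (the paper frames this as ``suppose regular, then $p_4+p_5+1\le n<2p_5+2$, hence $n=2p+3$'', but the lower bound is all that is used). From there the constructions diverge: the paper multiplies $Q^{p_1+1}$ by an auxiliary polynomial produced by the algorithm of Theorem~\ref{thm:main}/Corollary~\ref{cor:hompoly} to make up the residual orders $p_i-p_1-1$ at $X_2,\dots,X_5$, whereas you take the higher power $Q^{p+1}$, which already handles $X_1,\dots,X_4$ outright, and then append a single linear factor through $X_5$ to gain the one missing order there. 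Your construction is explicit, avoids any appeal to the algorithm, and makes the degree bookkeeping transparent; the paper's approach, by contrast, illustrates how the general machinery of Corollary~\ref{cor:hompoly} can be invoked uniformly. You are also more careful than the paper in explicitly disposing of $d\ge 3$ via Theorems~\ref{thm:k=1} and~\ref{thm:k=2}; the paper's own proof silently uses $d=2$ binomials despite the lemma being stated for $d\ge 2$.
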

\begin{proof}
    Suppose the interpolation problem is regular. Then we have
    $$p_4+p_5+1\leq
    n<2p_5+2
    $$
    and
    \begin{eqnarray*}
        \sum_{i=1}^5\binom{p_i+2}{2}>\sum_{i=2}^5\binom{p_i+2}{2}=\binom{2p_4+2+2}{2}.
    \end{eqnarray*}
    Thus, $n=2p_5+1$. Let  $Q(x,y)$ be the quadratic polynomial vanishing
    at these 5 points. Again by the way of Lemma
    \ref{lem:singula5}, we can get a polynomial of degree no more than
    $n-(2p_1+2)$, together with all of its partial derivatives of order up
    to $p_i-p_1-1$, vanishing at $X_i$ for $2\leq i\leq 5$. Thus
    $[Q(x,y)]^{p_1+1}\cdot f$ satisfies the homogenous interpolation
    condition. It is easy to check that
    \begin{eqnarray*}
        2(p_1+1)+n-(2p_1+2)=n
    \end{eqnarray*}
    which completes the proof.
\end{proof}

By collecting the discussion above, we have
\begin{thm}\label{thm:k=3}
    Assume $d\geq 2$. All Hermite interpolation of total degree on $m=d+3$ points are
    singular in $\R^d$  except for the case of $d=3,n=7$ and $p_i=3$ for
    $i=1,2,\ldots,6.$
\end{thm}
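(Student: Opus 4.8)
The plan is to collect all the case analysis already developed in this section into a single exhaustive dichotomy on the sorted profile $p_1 \le p_2 \le \cdots \le p_m$ with $m = d+3$. First I would dispose of the high-dimensional ranges via Corollary \ref{cor:d+k}: for $d \ge 6$ (any $p_m \ge 1$), for $d = 5$ with $p_m \ge 2$, and for $d = 4$ with $p_m \ge 3$, the scheme is singular outright. This reduces the problem to finitely many "small" configurations: $d = 5$ with $p_m = 1$, $d = 4$ with $p_m \le 2$, $d = 3$ (i.e. $m = 6$), and $d = 2$ (i.e. $m = 5$).

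Next I would handle the leftover $d = 5$ and $d = 4$ cases. For $d = 5, m = 8, p_m = 1$, the condition \eqref{number_condition} forces $\binom{n+5}{5} = \sum \binom{p_q+5}{5}$; a direct check shows no integer $n$ satisfies this (each node with $p_q = 1$ contributes $6$, each with $p_q = 0$ contributes $1$, and the possible totals cannot equal a binomial coefficient $\binom{n+5}{5}$), so the scheme cannot even be posed. For $d = 4, m = 7, p_m \le 2$, checking \eqref{number_condition} leaves only the two profiles $p_6 = p_7 = 2$ (rest $0$, $n = 3$) and $p_6 = p_7 = 1$ (rest $0$, $n = 2$); both have at most $d+1$ nodes with $p_q \ge 1$ and violate $p_q + p_r < n$, so Theorem \ref{xu} shows they are singular a.e.

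The heart of the proof is the $d = 3$ ($m = 6$) and $d = 2$ ($m = 5$) cases, which I would argue exactly as laid out in the running text. For $d = 3$: if $p_6 - 2 \ge p_5 = \cdots = p_1$, Theorem \ref{thm:main} applies; if $p_6 - 1 = p_5 = \cdots = p_1$, a binomial computation shows \eqref{number_condition} fails; if $p_6 > p_5 > p_1$, the chain $\sum p_i + 6 \le 3(p_5 + p_6 + 1) \le 3n$ invokes Theorem \ref{thm:main}; and if $p_5 = p_6$, the inequality $\sum_{i=1}^6 \binom{p_i+3}{3} - \binom{2p_6+4}{3} = -\tfrac13(p_6-3)(p_6+2)(p_6+1)$ together with a finite check for $p_6 < 3$ shows \eqref{number_condition} holds only at $p_i = 3, n = 7$, which is almost regular by Example 2. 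For $d = 2, m = 5$: first establish $n < 2p_5 + 2$ whenever the problem is regular, using $[Q]^{p_5+1}$ for a conic $Q$ through the five points; then split on the profile. Lemma \ref{lem:singula5} covers $p_2 + p_3 + 2 \le p_4 + p_5$; the uniform case is covered by the cited lemma of \cite{1992-Lorentz-p-}; the two remaining near-uniform profiles $p_1 \le p_2 + 1 = p_3 = p_4 = p_5$ and $p_1 \le p_2 = p_3 = p_4 = p_5 - 1$ are exactly Lemmas that follow it. I would finish by checking that these sub-cases are jointly exhaustive: when $p_2 + p_3 + 2 > p_4 + p_5$ with $p_2 \le p_3 \le p_4 \le p_5$, the gaps $p_5 - p_4$ and $p_4 - p_3$ and $p_3 - p_2$ are so constrained that the profile must be one of the three handled forms.

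The main obstacle is precisely this last bookkeeping step for $d = 2$: one must verify that every admissible five-node profile satisfying \eqref{number_condition} and \emph{not} satisfying \eqref{eq:5points} is captured by the uniform lemma or one of the two near-uniform lemmas, with no profile slipping through. This is where a careless enumeration could miss a family; I would organize it by the value of $p_5 - p_2$ and use $n = 2p_5 + 1$ (forced by regularity plus \eqref{number_condition}) to pin down the remaining freedom. Everything else is either a citation (Corollary \ref{cor:d+k}, Theorem \ref{xu}, Example 2) or a one-line binomial identity already displayed in the text.
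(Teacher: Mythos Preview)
Your proposal is correct and follows essentially the same route as the paper, which proves Theorem~\ref{thm:k=3} precisely by ``collecting the discussion above'': the reduction via Corollary~\ref{cor:d+k}, the finite checks for $d=5$ and $d=4$ (using Theorem~\ref{xu}), the four-way split for $d=3$, and the conic bound plus Lemma~\ref{lem:singula5} and the three follow-up lemmas for $d=2$. Your flagged exhaustiveness check for $d=2$ is indeed the one place the paper is terse; note in particular that the profile $p_1<p_2=p_3=p_4=p_5$ is not named by any of the three lemmas, but it is still covered because with $n=2p_5+1$ the count \eqref{number_condition} overshoots by $\binom{p_1+2}{2}+p_5+1>0$, forcing $n\ge 2p_5+2$ and hence singularity via the conic argument.
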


Combing Corollary \ref{cor:d+k:2} and Theorems
\ref{thm:k=1},\ref{thm:k=2},\ref{thm:k=3}, we have proved
\begin{thm}
  Assume $d\geq 2$. Consider the problem of Hermite interpolation of type total degree on $m=d+k$
  nodes in $\R^d$. Then
  \begin{enumerate}
    \item For $k\leq 1$, it is singular.
    \item For $k=2$, if $d=2$, and $p_1=p_2=p_3, p_4=p_3+1$ or $p_1=p_2-1,
    p_2=p_3=p_4$, it is almost regular; else if $d=3$, and $p_i=1,i=1,2,3,4,5,n=3$, it is almost regular; otherwise it is singular.
    \item For $k=3$, if $d=3$ and $p_i=3,(i=1,2,\ldots,6), n=7$, it
    is almost regular; otherwise it is singular.
    \item If $d\geq 2k$, it is singular.
  \end{enumerate}
\end{thm}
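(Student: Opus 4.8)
The plan is to obtain the theorem by assembling the four results already established in Sections 2 and 3, since each of its four clauses is, up to cosmetic rephrasing, a verbatim restatement of one of them. No new argument is needed; the work is purely organizational, and the genuine content has already been carried out — in particular the constructive algorithm of Theorem \ref{thm:main} and the dimension counts used to isolate the exceptional schemes.

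First I would dispatch clause 4: it is precisely Corollary \ref{cor:d+k:2}, which asserts that for $m\le d+k$ and $d\ge 2k$ every true Hermite interpolation of type total degree in $\R^d$ is singular; since $m=d+k$ satisfies $m\le d+k$, clause 4 follows at once. Next, clause 1 (the case $k\le 1$): here $2\le m=d+k\le d+1$, so Theorem \ref{thm:k=1} applies and yields singularity, the only escape being the Lagrange case $p_1=\cdots=p_m=0$, which is excluded as soon as one requires $p_m\ge 1$, i.e. that the scheme is genuinely Hermite. One checks in passing that clause 1 is consistent with clause 4, since $k\le 1$ forces $d\ge 2\ge 2k$.

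Clause 2 ($k=2$) is exactly Theorem \ref{thm:k=2}: on $m=d+2$ nodes the only almost regular schemes are the two bivariate families ($p_1=p_2=p_3$, $p_4=p_3+1$, or $p_1=p_2-1$, $p_2=p_3=p_4$) and the single trivariate scheme $p_1=\cdots=p_5=1$, $n=3$, all others being singular. Clause 3 ($k=3$) is exactly Theorem \ref{thm:k=3}: on $m=d+3$ nodes the only almost regular scheme is $d=3$, $p_1=\cdots=p_6=3$, $n=7$. Thus each clause is a direct citation, and the combined theorem is their logical conjunction.

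The one point to verify — and the only place a slip could occur — is that the four clauses do not conflict on their overlaps. For $k=2$ and $d\ge 4=2k$ clause 4 forces singularity, and indeed the exceptional families of clause 2 occur only for $d=2,3$; for $k=3$ and $d\ge 6$ clause 4 again forces singularity, and the exception of clause 3 occurs only for $d=3$. Hence the statements agree wherever they overlap, and no genuine obstacle remains: the hard work lives in the proofs of Theorems \ref{thm:k=1}, \ref{thm:k=2}, \ref{thm:k=3} and Corollary \ref{cor:d+k:2}.
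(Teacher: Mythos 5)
Your proposal is correct and matches the paper's own treatment exactly: the paper derives this theorem with no separate argument, stating only that it follows by combining Corollary \ref{cor:d+k:2} with Theorems \ref{thm:k=1}, \ref{thm:k=2}, and \ref{thm:k=3}. Your additional check that the clauses are mutually consistent on their overlaps is a sensible, if unstated, sanity check and introduces nothing at odds with the paper.
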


\section{Conclusion}
In this paper, we consider the singular problem of multivariate
Hermite interpolation of total degree. We make a detailed
investigation for Hermite interpolation problem of type total degree
on $m=d+k$ nodes in $\R^d$. Our results imply that the interpolation
problem in $\R^d$ is singular on $m=d+k$ nodes for $d\geq 2k$. For
$k\leq 3$, it is shown that only very few cases can produce regular
interpolation. The method developed in this paper can deal with the
case of small $k$ compared to $d$. For bigger $k$, Theorem
\ref{thm:main} is not very sharp.

\bibliographystyle{plain}

\end{document}